\newtheorem{thm}{Theorem} 
\newtheorem{theorem}{Theorem}
\newtheorem{Corollary}[theorem]{Corollary}
\newtheorem*{theorem*}{Theorem}
\newtheorem{lemma}[thm]{Lemma}
\newtheorem{proposition}[thm]{Proposition}
\theoremstyle{definition}
\newtheorem{remark}[thm]{Remark}
\theoremstyle{definition}
\theoremstyle{definition}
\newtheorem{definition}[thm]{Definition}
\newcommand{\SRA}{\operatorname{SRA}}
\newcommand{\DSE}{\operatorname{DSE}}
\newcommand{\LEG}{\operatorname{LEG}}
\newcommand{\LRB}{\operatorname{LRB}}
\newcommand{\CAT}{\operatorname{CAT}}
\newcommand{\diam}{\operatorname{diam}}
\numberwithin{equation}{section}
\def\R{{\mathbb R}}
\def\N{{\mathbb N}}
\def\wt{\widetilde}
\def\GG{{\Gamma}}
\def\<{\langle}
\def\>{\rangle}
\def \ee{{\epsilon}}
\def \gg {{\gamma}}
\def \aa {{\alpha}}
\def \bb {{\beta}}
\def \ll {{\lambda}}
\def \tt {{\theta}}
\begin{document}

\title{Sets with small angles in self-contracted curves}


\begin{abstract}
We study metric spaces with bounded rough angles. 
E. Le Donne, T. Rajala and E. Walsberg  implicitly used this notion to show that
infinite snowflakes can not be isometrically embedded into finite dimensional Banach spaces.   

We show that bounded non-rectifiable self-contracted curves contain metric subspaces with bounded rough angles. 
Which provides rectifiability of bounded self-contracted curves in a wide class of metric spaces including reversible 
$C^{\infty}$-Finsler manifolds, locally compact $\CAT(k)$-spaces with locally extendable geodesics and locally compact Busemann spaces with locally extendable geodesics.
 
We also extend the result on non embeddability of infinite snowflakes to this class of spaces.  
 




\end{abstract}
\keywords{Isometric Embedding, self-contracted curve, snowflake}
\subjclass[2010]{51F99}

\author{Vladimir Zolotov}
\address[Vladimir Zolotov]{Steklov Institute of Mathematics, Russian Academy of Sciences, 27 Fontanka, 191023 St.Petersburg, Russia, University of Cologne, Albertus-Magnus-Platz, 50923 Köln, Germany and Mathematics and Mechanics Faculty, St. Petersburg State University, Universitetsky pr., 28, Stary Peterhof, 198504, Russia.}
\email[Vladimir Zolotov]{paranuel@mail.ru}

\maketitle

\section{Introduction}
\subsection{Self-contracted curves}
Let $X$ be a metric space and $I \subset \R$ an interval. A curve $\gg:I \rightarrow X$ is said to be \textit{self-contracted} if,  
$$d(\gg(t_2),\gg(t_3)) \le d(\gg(t_1),\gg(t_3))\text{, for every $t_1 \le t_2 \le t_3.$}$$
We remark that $\gg$ is not necessarily a continuous curve.

Self-contracted curves arise as 
\begin{itemize}
\item{orbits of gradient flows of convex and quasiconvex functions in Euclidean spaces, see \cite[Proposition 6.2]{DLS},}
\item{gradient curves of lower semi-continuous quasi-convex functions in $\CAT(0)$-spaces, see \cite[Proposition 4.6]{OhRect},}
\item{trajectories of of the subgradient dynamical system, see \cite[Proposition 6.4]{DLS},}
\item{trajectories orthogonal to a convex foliation, see \cite[Proposition 6.6]{DLS}.}
\end{itemize}

We refer to \cite{DLS} and \cite{DDDL} for the background information on self-contracted curves.

We remind that a curve $\gg:I \rightarrow X$ is called rectifiable if 
$$\sup\Big\{\sum_{i=1}^{k-1}d(\gg(t_i), \gg(t_i+1)) \Big\vert  t_1,\dots,t_{k} \in I , t_1 < \dots < t_{k} \Big\} < \infty.$$

It was shown that bounded self-contracted curves are rectifiable in following spaces:
\begin{itemize}
\item{Euclidean spaces, see \cite{DDDL}, \cite{LMV}}
\item{Riemannian manifolds, see \cite{DDDR},}
\item{finite-dimensional normed spaces, see \cite{Le, ST},}
\item{$\CAT(0)$-spaces with some additional properties, see \cite{OhRect}.}
\end{itemize}

We give the following extension of those results.
\begin{theorem}\label{NoCurvesI}
Let $\frac{1}{2} < \aa < 1$ and let $X$ be
an $\SRA(\aa)$-free metric space (see Definition \ref{defSRAfree}(\ref{SRAfree})) or a proper locally $\SRA(\aa)$-free  metric space (see Definition \ref{defSRAfree}(\ref{locSRAfree})). Then every bounded self-contracted curve $\gg:I \rightarrow X$ is rectifiable.
\end{theorem}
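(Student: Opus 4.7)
The plan is to argue by contradiction. Suppose $\gamma: I \to X$ is a bounded self-contracted curve that fails to be rectifiable. The central step is to invoke the main technical principle announced in the abstract: any bounded non-rectifiable self-contracted curve contains an infinite metric subspace with uniformly bounded rough angles. From $\gamma$ one therefore extracts a countable set $S = \{p_n\} \subset \gamma(I)$ together with a uniform control on the rough angles subtended at triples of its points. The task reduces to showing that such a configuration cannot sit inside an $\SRA(\alpha)$-free space for $\alpha > \tfrac{1}{2}$.

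Second, I would connect bounded rough angles to the forbidden $\SRA(\alpha)$ patterns. Morally, a set whose rough angles are small compared to $\pi$ looks, after normalization, like an $\alpha$-snowflaked image of a very spread-out (almost ultrametric) configuration; when $\alpha > 1/2$, the triangle-inequality slack produced by the snowflake is enough to realize the specific three-point scaling pattern that defines $\SRA(\alpha)$. Consequently, passing if necessary to a subset of $S$ obtained by a pigeonhole on the scales $d(p_i, p_j)$, one exhibits inside $X$ the $\SRA(\alpha)$ configuration that is forbidden by Definition \ref{defSRAfree}(\ref{SRAfree}). This produces the desired contradiction in the global case.

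Third, to handle the proper locally $\SRA(\alpha)$-free case, I would localize the above extraction. Since $\gamma(I)$ is bounded and $X$ is proper, its closure is compact; a non-rectifiable self-contracted curve must concentrate infinite length inside an arbitrarily small ball around some accumulation point $p_\infty$. By refining the extraction, one can arrange the subset $S$ with bounded rough angles to lie entirely in a neighborhood of $p_\infty$ of prescribed radius, at which point the local $\SRA(\alpha)$-free hypothesis (Definition \ref{defSRAfree}(\ref{locSRAfree})) rules out the same configuration and closes the argument.

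The main obstacle will be the first step, namely extracting a set with uniformly bounded rough angles from the hypothesis of non-rectifiability alone. I expect this to rely on an iterative selection procedure driven by the self-contraction inequality $d(\gamma(t_2),\gamma(t_3)) \le d(\gamma(t_1),\gamma(t_3))$: the length blowing up forces, via a pigeonhole on scales and directions, arbitrarily many points that are almost collinearly arranged along $\gamma$, and self-contraction promotes this qualitative near-collinearity into a uniform quantitative rough-angle bound. The combinatorial bookkeeping needed to upgrade qualitative divergence of length to a uniform angle estimate on an infinite subspace is the delicate part of the argument.
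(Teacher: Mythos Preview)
Your overall architecture---derive a contradiction by extracting large $\SRA(\aa)$ subspaces from a non-rectifiable bounded self-contracted curve---is exactly the paper's. The global case is precisely a direct application of Theorem~\ref{SRAinTC}: a non-rectifiable bounded curve yields $\DSE$ subspaces with $L/D$ arbitrarily large, hence $k$-point $\SRA(\aa)$ subspaces for every $k$, contradicting $\SRA(\aa)$-freeness.

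Two points deserve correction. First, your Step~2 is a misreading: in this paper ``bounded rough angles'' \emph{is} the $\SRA(\aa)$ condition. Theorem~\ref{SRAinTC} already outputs a $k$-point $\SRA(\aa)$ subspace, not some intermediate ``rough-angle'' object that still needs to be converted. The snowflake heuristic and the pigeonhole on scales you describe are not part of the argument; no translation step is needed between the extracted set and the forbidden configuration.

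Second, your localization strategy for the proper, locally $\SRA(\aa)$-free case is more delicate than necessary and carries a real risk: Theorem~\ref{SRAinTC} gives no control on \emph{where} the $\SRA(\aa)$ subspace sits inside the $\DSE$ space, so arranging it to land in a prescribed small ball requires an extra argument you have not supplied. The paper bypasses this entirely via Theorem~\ref{Glob}: since $X$ is proper and $\gg$ is bounded, $\overline{\gg(I)}$ is compact, and a compact locally $\SRA(\aa)$-free space is globally $\SRA(\aa)$-free by a finite-cover plus pigeonhole argument. This reduces the local case to the global one in one line, with no need to concentrate length near an accumulation point.
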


In particular we obtain that bounded self-contracted curves are rectifiable in 
\begin{itemize}
\item{complete reversible $C^{\infty}$-Finsler manifolds,}
\item{complete locally compact $\CAT(k)$-spaces with locally extendable geodesics,}
\item{complete locally compact  Busemann spaces with locally extendable geodesics.}
\end{itemize}
And more generally in complete locally compact geodesic metric spaces satisfying $\LEG$ condition (see Definition \ref{defLEG}) and 
$\LRB$ condition (see Definition \ref{defLRB}).

The main feature of the paper is a new approach to proving the rectifiability of bounded self-contracted curves. Instead of studying   
self-contracted curves in a particular space we study their properties which are independent from the ambient space.
We introduce a class of spaces which play the role of an obstacle.  We call  them $\SRA(\aa)$ spaces.
We show that every non-rectifiable self-contracted curve contains an arbitrary large $\SRA(\aa)$ subspace, see Theorem \ref{SRAinTC}. On the other hand we give Theorems \ref{weakEFq}, \ref{Glob}, \ref{GlobQ} which provide examples of spaces without big $\SRA(\aa)$-subspaces. 






\subsection{$\SRA(\aa)$ spaces and their presence in self-contracted curves}
\begin{definition}
Let $X$ be a metric space and $0 < \aa < 1$. We say that $X$ is an $\SRA(\aa)$ space or that $X$ satisfies $\SRA(\aa)$ condition if for every $x,z,y \in X$ we have 
$$d(x,y) \le \max\{d(x,z) + \aa d(z,y), \aa d(x,z) + d(z,y)\}.$$
\end{definition}
This condition says that all angles in $X$ are small in a rough sense.
Note that in case if $X$ is a subspace of Euclidean space. An inequality 
$$\measuredangle xzy > \arccos(-\aa) = \pi - \arccos(\aa)$$
implies
$$cos(\pi - \measuredangle xzy) > \aa,$$
and hence 
$$d(x,y) > \max\{d(x,z) + \aa d(z,y), \aa d(x,z) + d(z,y)\}.$$

We need the following discrete version of a self-contracting curve considered as a separate metric space. 
We also reverse the order of points to make further proofs look more natural.

\begin{definition}
Let $X=\{x_1,\dots,x_n\}$ be a metric space. We say that $X$ is \textit{discrete self-expanding} space or shorter $\DSE$ space if, 
$$d(x_i,x_j) \le d(x_i,x_k)\text{, for every $1 \le i \le j \le k \le n.$}$$
We define $L(X)$ and $D(X)$ by 
$$L(X) = \sum_{i = 1}^{i = n-1}d(x_i,x_{i+1})\text{ and } D(X) = d(x_1,x_n).$$
\end{definition}
\begin{remark}
It is easy to show (see Lemma \ref{2lemma}) that 
$$\diam(X) \le 2D(X) \le \diam(X),$$
where $\diam(X) = \max_{x,y \in X}\{d(x,y)\}$.
\end{remark}

\begin{theorem} \label{SRAinTC}
For every $\frac{1}{2} < \aa < 1$ and $k \in \N$ there exists $C = C(\aa, k) > 0$ such that for every $\DSE$ space $X$ such that $\frac{L(X)}{D(X)} > C$ there exists a $k$-point $\SRA(\aa)$ subspace $Y \subset X$.
\end{theorem}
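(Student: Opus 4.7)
The plan is to argue by induction on $k$, with the case $k=2$ being trivial: every two-point metric space is $\SRA(\aa)$.

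For the inductive step, I assume the result for $k-1$ with constant $C(\aa,k-1)$ and seek a suitable $C = C(\aa,k)$. Given a $\DSE$ space $X = \{x_1,\dots,x_n\}$ with $L(X)/D(X) > C$, the key observation is that the radial function $r_i := d(x_1,x_i)$ is monotone non-decreasing in $i$ by the $\DSE$ property, running from $0$ up to $D(X)$. I would select an index $m$ that cuts $X$ into two sub-$\DSE$ spaces $X^- = \{x_1,\dots,x_m\}$ and $X^+ = \{x_m,\dots,x_n\}$ so that $X^-$ has small diameter (at most $\ll D(X)$ for some $\ll = \ll(\aa)<1$), while $L(X^-)/D(X^-)$ still exceeds $C(\aa,k-1)$; existence of such an $m$ would come from a combined pigeonhole-on-length and radial-monotonicity argument.

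Applying the inductive hypothesis to $X^-$ gives a $(k-1)$-point $\SRA(\aa)$ subspace $Y \subset X^-$. I would then set $Y' = Y \cup \{x_n\}$ and check that $Y'$ is $\SRA(\aa)$. Triples internal to $Y$ satisfy the condition by induction, so only triples of the form $\{y,y',x_n\}$ with $y,y' \in Y$ need to be verified.

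The main obstacle will be exactly this verification. Such a triple has two ``long'' sides $d(y,x_n), d(y',x_n) \in [(1-\ll)D, (1+\ll)D]$ and one ``short'' side $d(y,y') \le 2\ll D$. The $\SRA(\aa)$ condition at the middle vertex $y$ or $y'$ requires $|d(y,x_n) - d(y',x_n)| \le \aa\, d(y,y')$, a bound strictly sharper by the factor $\aa<1$ than the one provided by the triangle inequality. Forcing this tighter bound is the crux of the proof: the cut index $m$ must be chosen at the right scale so that the $\DSE$ monotonicity pins the two long sides into near-equality. I expect to achieve this by iterating the splitting procedure through several nested radial scales before invoking the induction, which would determine the recursive growth of $C(\aa,k)$ in terms of $C(\aa,k-1)$ and factors of $1/(1-\aa)$.
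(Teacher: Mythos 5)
Your plan founders exactly at the step you identify as the crux, and the remedy you sketch (choosing the cut scale so that the two long sides are in ``near-equality'') cannot work. For a triple $\{y,y',x_n\}$ with $d(y,y')$ small and $d(y,x_n),d(y',x_n)$ of order $D$, the $\SRA(\aa)$ condition at the middle vertex reduces, as you say, to $|d(y,x_n)-d(y',x_n)|\le \aa\, d(y,y')$. This is a \emph{relative} bound: it must beat the triangle inequality by the factor $\aa<1$ no matter how small $d(y,y')$ is. If $y$ and $y'$ happen to lie (even approximately) on a geodesic toward $x_n$, then $|d(y,x_n)-d(y',x_n)|$ equals $d(y,y')$ up to lower-order terms, and the bound fails for every $\aa<1$. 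Shrinking $\diam(X^-)$ only shrinks both sides of the inequality proportionally; it does not change the ratio. Moreover, the $(k-1)$-point set $Y$ handed to you by the inductive hypothesis carries no information whatsoever about its ``orientation'' relative to $x_n$, so there is no way to rule out this collinear configuration. A symptom of the gap: your argument never uses the hypothesis $\aa>\tfrac12$, which is essential to the theorem.

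The paper's proof is structured around precisely this difficulty. It splits $\SRA(\aa)$ for a $\DSE$ space into two one-sided conditions: (i) $d(x_i,x_k)\le d(x_i,x_j)+\aa d(x_j,x_k)$ and (ii) $d(x_j,x_k)\le d(x_i,x_k)+\aa d(x_i,x_j)$ for $i<j<k$. Condition (i) is extracted from any $\DSE$ space with large $L/D$ by a weighted potential-function argument (Lemma \ref{StandAngles}). Condition (ii) --- which is exactly your problematic inequality, with $x_k$ playing the role of the far point --- is \emph{not} forced pointwise; instead, triples are two-colored according to whether (ii) holds, and Ramsey's theorem yields either a $k$-point monochromatic ``good'' set (done) or a long monochromatic ``bad'' chain. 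Lemma \ref{WeirdAngles} then shows a long bad chain is impossible: condition (i) prevents consecutive gaps from decaying faster than geometrically with ratio $\tfrac{1-\tt}{2}$, and summing the bad inequalities along the chain forces $\aa\le\tfrac12\cdot\tfrac{1+\tt}{1-\tt}$, contradicting $\aa>\tfrac12$ for small $\tt$. Some combinatorial mechanism of this kind (allowing condition (ii) to fail on some triples and deriving a contradiction from its failing on too many) is what your induction is missing; without it the step from $Y$ to $Y\cup\{x_n\}$ cannot be completed.
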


Theorem \ref{SRAinTC} is inspired by the following proposition. Which is a key ingredient for showing that 
finite dimensional normed spaces do not contain infinite snowflakes, see \cite{BS}.

\begin{proposition}\cite[proof of Theorem 1.1]{BS}\label{SAinSN}
For every $0 < \aa < 1$ and every metric space $(Y,d)$ the $\aa$-snowflake space $(Y,d^\aa)$ satisfies 
$\SRA(\aa)$ condition.
\end{proposition}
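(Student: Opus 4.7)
The approach is to reduce the snowflake $\SRA(\alpha)$ inequality for $(Y,d^{\alpha})$ to a scalar inequality in two nonnegative reals, and then dispatch the latter by elementary calculus.

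I would start by fixing arbitrary points $x,y,z \in Y$, setting $a := d(x,z)$ and $b := d(z,y)$, and observing that the two arguments of the $\max$ in the definition of $\SRA(\alpha)$ are interchanged when $x$ and $y$ are swapped; so one may assume without loss of generality that $a \ge b$. The triangle inequality in $(Y,d)$ gives $d(x,y) \le a+b$, and monotonicity of $t \mapsto t^{\alpha}$ then yields $d(x,y)^{\alpha} \le (a+b)^{\alpha}$. Since $a \ge b \ge 0$ and $\alpha > 0$ imply $a^{\alpha} \ge b^{\alpha}$, the maximum $\max\{a^{\alpha}+\alpha b^{\alpha},\, \alpha a^{\alpha}+b^{\alpha}\}$ equals $a^{\alpha} + \alpha b^{\alpha}$, and the whole proposition reduces to the scalar inequality
$$(a+b)^{\alpha} \le a^{\alpha} + \alpha b^{\alpha} \qquad \text{for all } a \ge b \ge 0,\ 0 < \alpha < 1.$$

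For this scalar inequality my plan is to use the identity $(a+b)^{\alpha} - a^{\alpha} = \int_a^{a+b} \alpha t^{\alpha - 1}\, dt$ together with the observation that $t^{\alpha - 1}$ is decreasing on $(0,\infty)$ because $\alpha < 1$. The assumption $a \ge b$ forces $t \ge a \ge b$ throughout the interval of integration, so $\alpha t^{\alpha - 1} \le \alpha b^{\alpha - 1}$; multiplying by the length $b$ of the interval yields exactly the desired bound $\alpha b^{\alpha}$.

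There is no serious obstacle here. The reduction to a one-variable inequality is straightforward, and the one-variable inequality is essentially a quantitative form of the concavity of $t \mapsto t^{\alpha}$. The only point requiring care is keeping track of which of $a,b$ is larger so that the correct term of the $\max$ is selected, which is precisely what the initial WLOG reordering handles. I note also that the hypothesis $a \ge b$ is essential to the integral estimate, and the restriction $\alpha < 1$ enters only through the monotone decrease of $t^{\alpha - 1}$.
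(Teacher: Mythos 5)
Your proof is correct. The paper itself gives no argument for Proposition \ref{SAinSN} (it is quoted from the cited reference), and your route --- reduce by symmetry to the scalar inequality $(a+b)^{\alpha}\le a^{\alpha}+\alpha b^{\alpha}$ for $a\ge b\ge 0$ and establish it via the monotone decrease of $t\mapsto \alpha t^{\alpha-1}$, i.e.\ concavity of $t\mapsto t^{\alpha}$ --- is essentially the same argument as in that source.
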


\subsection{$\SRA(\aa)$-free spaces}
\begin{definition} \label{defSRAfree}
Let $X$ be a metric space and  $0 < \aa < 1$.
\begin{enumerate}
\item{ For $k \in \N$ we say that $X$ is \textit{free of $k$-point $\SRA(\aa)$ subspaces} if 
for every $k$-point $Y \subset X$ $Y$ does not satisfy $\SRA(\aa)$ condition. \label{kSRAfree}}
\item{ We say that $X$ is \textit{$\SRA(\aa)$-free} if 
it is free of $k$-point $\SRA(\aa)$ subspaces for some $k \in \N$.\label{SRAfree}}
\item{ We say that $X$ is \textit{locally $\SRA(\aa)$-free} if for every 
$x \in X$ there exist $r >0$ such that $B_r(x)$ is $\SRA(\aa)$-free (considered as metric space with metrics induced from $X$).
\label{locSRAfree}} 
\end{enumerate}
\end{definition}

To prove that certain spaces are locally $\SRA(\aa)$-free we introduce two additional conditions: $\LEG$ condition and $\LRB$ condition.
Theorem \ref{weakEFq} provides that $\LEG$ condition and $\LRB$ condition imply local $\SRA(\aa)$-free condition for doubling
metric spaces.

\begin{definition}\label{defLEG}
Let $X$ be a  complete geodesic  metric space. We say that $x \in X$ satisfies \textit{locally extendable geodesics} condition or shorter $\LEG$ condition with a parameter  $\ee > 0$ if   for every unit speed
 minimizing geodesics  $\gg: [0,T] \rightarrow  B_\ee(x)$ there exist a unit speed minimizing geodesic $\wt \gg: [0,T + \ee] \rightarrow X$
extending it i.e. such that,
$$\wt \gg(t) = \gg(t)\textit{, for every $t \in [0,T]$}.$$
We say that $X$ satisfies $\LEG$ condition if every $x \in X$ satisfies $\LEG$ condition with some $\ee(x) > 0$.
\end{definition}

\begin{definition}\label{defLRB}
Let $X$ be a complete geodesic metric space. We say that $x \in X$ satisfies \textit{locally rough Busemann} condition or shorter $\LRB$ condition with parameters  $H, K > 0$ if   for  every pair of 
unit speed minimizing geodesic  $\gg_1: [0,L_1] \rightarrow  B_H(x)$, $\gg_2: [0,L_2] \rightarrow  B_H(x)$ s.t.,
$$\gg_1(0) = \gg_2(0),$$
we have 
$$d(\gg_1(tL_1),\gg_2(tL_2)) \le Ktd(\gg_1(L_1),\gg_2(L_2))\text{, for every $t\in[0,1]$.}$$
We say that $X$ satisfies $\LRB$ condition if every $x \in X$ satisfies $\LRB$ condition with some $H(x), K(x) > 0$.
\end{definition}
The class of spaces satisfying $\LRB$ condition includes following subclasses: 
\begin{enumerate}
\item{complete reversible $C^{\infty}$-Finsler manifolds,}
\item{locally $L$-convex spaces, see \cite{OhtaConv}. This subclass is huge by itself and contains all next subclasses, see \cite[Section 3]{OhtaConv},}
\item{Alexandrov spaces with a local upper curvature bound. The subclass of of locally compact
spaces with an upper curvature bound and extendable geodesics is studied
 in \cite{LN}.  }
\item{Strictly convex Banach spaces.}   
\end{enumerate}

\begin{remark}
Banach spaces which are not strictly convex do not satisfy $\LRB$ condition as it formulated.  
In order to make our approach work for this case one should modify our definitions in the style of the bicombings theory. 
This means that one fixes a set of ``good'' minimizing geodesics in the metric space such that every two points can be connected 
by a ``good'' minimizing geodesic. In the case of Banach spaces we can take  straight lines as such a set. In this setting we say that $\LEG$-condition 
is satisfied if any ``good'' minimizing geodesic can be extended and to a longer ``good'' minimizing geodesic. And $LRB$ condition is required to hold 
only for ``good'' geodesics. In this setting Theorems \ref{weakEFq}, \ref{EmbDoub} hold 
and have basically have the same proofs.   
\end{remark}

\begin{theorem}\label{weakEFq}
Let $X$ be a geodesic space and $x \in X$ be a point satisfying $\LEG$ condition with $\ee > 0$ and $LRB$ with constants 
$H > 0, K \ge 1$. Let $R = \frac{1}{2}\min\{\ee, H\}$ and suppose that $B_R(x)$ has doubling property with a constant $\ll \in \N$. Then for every $0 < \aa < 1$ $B_{\frac{R(1- \aa)}{6K}}(x)$ is free of $N$-point $SRA(\aa)$ subspaces,
where $N \in \N$ depends only on $K, \aa, \ll$.  
\end{theorem}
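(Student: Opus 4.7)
My plan is to argue by contradiction: suppose $Y \subset B_{r}(x)$ is an $\SRA(\aa)$ subspace of cardinality $N+1$, where $r = R(1-\aa)/(6K)$, and show that $N$ must be bounded by a function of $K, \aa, \ll$ alone. The guiding idea is to ``linearize'' $Y$ near $x$ by extending geodesics from a fixed basepoint out to length comparable with $R$, and then use doubling on $B_R(x)$ to count the possible endpoint locations. First I fix a basepoint $y_0 \in Y$. For every $y \in Y\setminus\{y_0\}$, the minimizing geodesic $[y_0,y]$ has length at most $2r$; since $3r < \ee/2$, it is contained in $B_\ee(x)$, so applying $\LEG$ once and truncating extends it to a unit-speed minimizing geodesic $\gg_y\colon [0, R-r]\to X$. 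Set $p_y := \gg_y(R-r)$. Then $d(p_y,x) \le (R-r)+r = R$, so all $p_y$ lie in $\overline{B_R(x)}$ (where doubling applies), and the same estimate bounds $\gg_y([0,R-r])$ inside $B_H(x)$ (since $R\le H/2$), so the $\LRB$ condition at $x$ applies to every pair $\gg_y,\gg_{y'}$.

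The analytic heart is combining $\LRB$ with $\SRA(\aa)$ on the triangle $\{y_0,y,y'\}$. For $y,y'\in Y\setminus\{y_0\}$ with $r_y := d(y_0,y) \le r_{y'} := d(y_0,y')$, $\LRB$ applied with $t = r_y/(R-r)$ yields
\[
d\bigl(y,\gg_{y'}(r_y)\bigr) \;\le\; K\cdot \tfrac{r_y}{R-r}\cdot d(p_y,p_{y'}),
\]
and combining with the triangle inequality along $\gg_{y'}$ gives $d(y,y') \le (r_{y'}-r_y) + K r_y/(R-r)\cdot d(p_y,p_{y'})$. In the opposite direction, I apply $\SRA(\aa)$ to $\{y_0,y,y'\}$ with $y$ as the middle vertex: under the assumption $r_{y'}\ge (1+\aa)r_y$, one checks that the ``short'' branch $d(y,y')\ge(r_{y'}-r_y)/\aa$ would force $d(y,y')\ge r_y$, which is exactly the regime in which $\SRA(\aa)$ reduces to the alternative $d(y,y')\ge r_{y'}-\aa r_y$. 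Equating this lower bound with the upper bound above and cancelling $r_y$ gives
\[
d(p_y,p_{y'}) \;\ge\; \tfrac{(R-r)(1-\aa)}{K} \;\ge\; \tfrac{5R(1-\aa)}{6K}.
\]
Thus any two points whose $r_y$-values lie in $(1+\aa)$-dyadic shells that are at least two indices apart produce $p$-images separated by a universal multiple of $R$. Doubling of $\overline{B_R(x)}$ then bounds the number of nonempty such shells by some $M_1(K,\aa,\ll)$.

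To bound the number of points within a single $(1+\aa)$-dyadic $r_y$-shell, I would recurse: choose a new basepoint inside the shell and repeat the construction. Since every member of $Y$ still lies in $B_r(x)$, the $\LEG$, $\LRB$ and doubling hypotheses hold verbatim with the same constants. Each level of the recursion multiplies the running count by at most $M_1$, and the recursion must terminate after a bounded number of steps, because an $\SRA(\aa)$ subset confined to a sufficiently thin spherical annulus is combinatorially rigid (a direct consequence of $\SRA(\aa)$ applied to $4$-point configurations forces any such subset to have universally bounded cardinality). Composing these bounded factors produces the desired $N = N(K,\aa,\ll)$.

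The main obstacle, I expect, is the within-shell step: when $r_y \approx r_{y'}$, the $\SRA(\aa)$ inequality with $y$ in the middle degenerates and the paragraph-two estimate fails to separate $p_y$ from $p_{y'}$. One needs either a careful choice of new basepoint that re-introduces spread among the $r$-values, or a purely combinatorial bound for $\SRA(\aa)$-subsets lying in a thin annulus around a fixed point. Making sure the recursion terminates with a bound that depends only on $K,\aa,\ll$, and not on any implicit scale or linear structure of $X$, is the technically delicate part of the argument.
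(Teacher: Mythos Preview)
Your first stage---the basepoint extension and the separation estimate when $r_{y'}\ge(1+\aa)r_y$---is correct and in the spirit of Lemma~\ref{FarOnSphere}. The genuine gap is the second stage. Once all remaining points lie in a single thin annulus around $y_0$, the $\SRA(\aa)$ condition applied to any triple containing $y_0$ becomes essentially vacuous (both alternatives are automatically satisfied when $r_y\approx r_{y'}$), so the basepoint $y_0$ carries no further information. Recursing with a new basepoint $y_1$ chosen inside the shell gives no control on how the new radii $d(y_1,y)$ are distributed: they may again fall into a single thin shell, and nothing forces the process to terminate in a number of steps depending only on $K,\aa,\ll$. Your appeal to ``combinatorial rigidity of $\SRA(\aa)$-sets in a thin annulus via $4$-point configurations'' is not a theorem---there is no such purely metric bound, and indeed in Euclidean space one can place arbitrarily many points on a sphere around $y_0$ so that every triple containing $y_0$ satisfies $\SRA(\aa)$ trivially. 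So the recursion as stated does not close.

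The paper avoids this obstacle entirely by a different mechanism. Instead of fixing one basepoint, it extends the geodesic through \emph{every ordered pair} $(x_i,x_j)$ to a point $I_{ij}\in S_R(x)$, then colors the unordered pair $\{x_i,x_j\}$ by the pair of cells of a fixed $r$-net on $S_R(x)$ that contain $I_{ij}$ and $I_{ji}$ (so there are $m^2$ colors, with $m=m(\ll,K,\aa)$ the size of the net). Ramsey's theorem then produces a monochromatic triple $\{x_i,x_j,x_k\}$; for such a triple two of $I_{ij},I_{jk},I_{kj}$ land in the same net-cell, and the separation estimate (Lemma~\ref{FarOnSphere}(\ref{FOS2})) applied to that pair directly yields $d(x_i,x_k)\ge d(x_i,x_j)+\aa d(x_j,x_k)$ and the symmetric inequality, so this triple violates $\SRA(\aa)$. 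The point is that by coloring \emph{pairs} rather than working outward from one vertex, Ramsey hands you three points whose pairwise ``directions'' are already aligned, and no recursion is needed.
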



Previous theorem can be considered as generalization of the following proposition see, \cite{EF,KS}.
\begin{proposition}\label{propEF}
For any $n \in \N$ and $0 < \aa < \pi$ there exists $N \in \N$ such that if $S \subset \R^n$
has cardinality at least $N$ then there are distinct $x,z,y \in S$ such that $\aa \le \measuredangle xzy \le \pi$.
\end{proposition}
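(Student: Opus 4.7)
The plan is to prove Proposition \ref{propEF} by induction on the dimension $n$. The base case $n=1$ is immediate with $N(1,\alpha)=3$, since any three distinct reals $x < z < y$ yield $\measuredangle xzy = \pi$.

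For the inductive step, fix $n \ge 2$ and suppose the proposition holds in dimension $n-1$. Given $S \subset \R^n$ with $|S|$ sufficiently large, pick any $p \in S$ and consider the unit directions $v_x := (x-p)/|x-p|$ for $x \in S \setminus \{p\}$. First I would cover $S^{n-1}$ by $M=M(n,\eta)$ spherical caps of angular radius $\eta$, where $\eta>0$ is to be chosen small in terms of $\alpha$. By pigeonhole some cap collects a subset $T \subset S \setminus \{p\}$ of size at least $(|S|-1)/M$; for any two $x,y \in T$ this gives $\measuredangle xpy \le 2\eta$.

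The main obstacle is that a small $\theta = \measuredangle xpy$ does not by itself force a large angle $\measuredangle pxy$: from the identity
\[
\cot(\measuredangle pxy) = \frac{|px| - |py|\cos\theta}{|py|\sin\theta},
\]
one sees that $\measuredangle pxy \to \pi$ only if, in addition to $\theta$ being small, the ratio $|px|/|py|$ is bounded strictly below $1$; when $|px| \approx |py|$ the two base angles are both close to $(\pi-\theta)/2 \approx \pi/2$, which is useless for $\alpha > \pi/2$. I would resolve this by a second pigeonhole on distances: group the points of $T$ into geometric shells around $p$ of ratio $1+\rho$, for some small $\rho>0$. If two nonempty shells have radius ratio at least $R(\alpha)$, chosen so that the displayed formula with $\theta \le 2\eta$ forces $\measuredangle pxy \ge \alpha$, then any pair consisting of one point from the nearer shell and one from the farther shell provides the required triple together with $p$, with $p$ and $y$ as the ``endpoints'' and $x$ as the apex of the large angle.

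Otherwise $T$ is contained in a single thin annulus around $p$: a slab of radial extent $\lesssim r\rho$ and transverse extent $\lesssim r\eta$ near the cap axis, where $r$ is the mean shell radius. Choosing $\rho \ll \eta$, orthogonal projection of $T$ onto the $(n-1)$-dimensional hyperplane perpendicular to the cap axis distorts pairwise distances by a multiplicative factor approaching $1$, and hence distorts angles in triples by an additive amount approaching $0$. Applying the inductive hypothesis to the projected set in $\R^{n-1}$ with a threshold $\alpha' \in (\alpha,\pi)$ slightly above $\alpha$, and pulling back the resulting large-angle triple, yields three points of $T$ with angle at least $\alpha$ in $\R^n$. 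The most delicate part of the argument is the parameter choice: $\eta, \rho$ and $\alpha'$ must be balanced so that the projection distortion absorbs the gap $\alpha'-\alpha$ while $N(n-1,\alpha')$ remains finite, producing an explicit recursion for $N(n,\alpha)$ and closing the induction.
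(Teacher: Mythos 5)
There is a genuine gap in the inductive step, in the projection onto the hyperplane $u^\perp$ perpendicular to the cap axis. After your two pigeonholes the surviving set lies in a region whose extent \emph{along} $u$ is of order $r(\rho+\eta^2)$, which is small but not zero, while its transverse extent ranges from $0$ up to about $r\eta$. The claim that orthogonal projection ``distorts pairwise distances by a multiplicative factor approaching $1$'' fails precisely for pairs whose separation is mostly axial: for example $q_1=p+ru$ and $q_2=p+r(1+\rho)u$ both lie in the cap and in the thin shell, yet their projections coincide, so the multiplicative distortion is infinite. Such pairs also destroy the angle transfer: if $\pi(q_1)=\pi(q_2)$ and $q_3$ is a third point of the set, the projected triple is degenerate and the angle at $\pi(q_1)$ bears no relation to $\measuredangle q_2q_1q_3$ upstairs (which is close to $\pi/2$ in this example, not close to the projected value). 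Since the inductive hypothesis applied to the projected set gives no lower bound on the pairwise projected distances of the large-angle triple it produces, you cannot pull that triple back. Repairing this seems to require an extra mechanism (e.g.\ rescaling and restarting on clusters of diameter comparable to the axial thickness), which is not in the proposal. A second, more cosmetic, issue is the dichotomy in the shell argument: if no two nonempty shells have radius ratio at least $R(\alpha)$, the set lies in an annulus of ratio about $R(\alpha)$, not in a single shell of ratio $1+\rho$; you need one more pigeonhole over the $\approx\log_{1+\rho}R(\alpha)$ shells to reach the ``thin annulus'' configuration. That one is easily fixed.

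For comparison, the proof the paper relies on (it cites this proposition to Erd\H{o}s--F\"uredi type results rather than proving it, but its own Theorem \ref{weakEFq} is proved by exactly this mechanism) avoids both the base point and the dimension induction: cover the sphere of \emph{directions} by caps of angular radius $\tfrac{\pi-\alpha}{2}$, color each unordered pair $\{x,y\}\subset S$ by a cap containing $\pm(y-x)/|y-x|$, and apply Ramsey's theorem to get three points all of whose mutual directions lie in one cap about some $u$. Ordering these three points by their projection onto $u$, the two difference vectors at the middle point $z$ point within $\tfrac{\pi-\alpha}{2}$ of $u$ and of $-u$ respectively, so $\measuredangle xzy\ge\alpha$. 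This works in every dimension at once and involves no distance or distortion estimates, only the coloring and one projection onto a line.
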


In Theorem \ref{weakEFq} instead of dealing with locally compact case we consider a more restrictive local doubling condition.
However for spaces satisfying $\LEG$ and $\LRB$ this is the same as the following theorem says.



\begin{theorem}\label{EmbDoub}
Let $X$ be a locally compact geodesic metric space and $x \in X$ be a point satisfying $\LEG$ condition with a constant $\ee > 0$ and 
$\LRB$ condition with constants $H > 0, K \ge 1$. Then for every $r > \frac{\min\{\ee, H\}}{12K}$  
\begin{enumerate}
\item{there is a bi-Lipschitz embedding of $B_r(x)$ into a finite dimensional Euclidean space,\label{Emb}} 
\item{$B_r(x)$ satisfies doubling condition (considered as metric space with metrics induced from $X$).\label{Doub}}
\end{enumerate}
\end{theorem}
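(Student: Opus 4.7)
The plan is to combine Hopf--Rinow (giving compactness of closed balls), Theorem \ref{weakEFq} (giving $\SRA(\aa)$-freeness of small balls around $x$), and Assouad-type embedding techniques (upgraded from snowflake to bi-Lipschitz via $\SRA(\aa)$-freeness in the spirit of Le Donne--Rajala--Walsberg) to obtain both conclusions.

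Since $X$ is complete, locally compact, and geodesic, Hopf--Rinow gives that $\overline{B_\rho(x)}$ is compact for every $\rho > 0$. Setting $R = \min\{\ee,H\}/2$, I would first establish that $B_R(x)$ is doubling with some constant $\ll \in \N$, using the LEG and LRB conditions at $x$ together with this compactness: LEG supplies a full family of extendable minimizing geodesics through $x$, LRB bounds their spread by the factor $K$, and local compactness caps the ``number of geodesic directions'' at $x$, so that $B_R(x)$ admits a cone-like bi-Lipschitz model over a finite-dimensional object and hence has finite Assouad dimension and is doubling. With this doubling in hand, Theorem \ref{weakEFq} applies and yields that for every $\aa \in (0,1)$ the ball $B_{R(1-\aa)/(6K)}(x)$ is $\SRA(\aa)$-free with an $N$-point bound depending only on $K,\aa,\ll$.

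Next, Assouad's theorem produces a snowflake bi-Lipschitz embedding of this small ball into some $\R^N$, and the $\SRA(\aa)$-freeness then allows this snowflake embedding to be upgraded to a genuine bi-Lipschitz embedding, using that bounded rough angles forbid nontrivial snowflake distortion across arbitrarily many scales (the Le Donne--Rajala--Walsberg mechanism referenced in the abstract). To extend the bi-Lipschitz embedding from this small ball to $B_r(x)$ for arbitrary $r > \min\{\ee,H\}/(12K)$, I would cover the compact set $\overline{B_r(x)}$ by finitely many small balls each bi-Lipschitz embedded into $\R^N$ via the preceding step, and assemble these local charts into a single global bi-Lipschitz map into some larger $\R^{N'}$ via a direct-sum construction augmented by distance coordinates to a suitable finite separating net. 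Conclusion (\ref{Doub}) then follows automatically from (\ref{Emb}), since any bi-Lipschitz subset of a finite-dimensional Euclidean space is doubling.

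The main obstacle is twofold. First, one must establish doubling of $B_R(x)$ from LEG and LRB at the single point $x$ without assuming doubling a priori; the cone-like parametrization by extendable geodesic directions, controlled by $K$ via LRB and by compactness via the finite-dimensionality of the space of directions at $x$, is the crux. Second, one must assemble bi-Lipschitz charts of small sub-balls (whose centers do not a priori satisfy LEG or LRB) into a global bi-Lipschitz embedding of $B_r(x)$; the workaround is to select the covering sub-balls inside $B_R(x)$, so that they inherit the $\SRA(\aa)$-freeness and doubling, after which classical Assouad-type patching in a compact doubling setting assembles them into a single bi-Lipschitz embedding of $B_r(x)$ into Euclidean space.
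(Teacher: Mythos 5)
Your proposal has two genuine gaps, and both occur at the load-bearing steps. First, you assert that $B_R(x)$ is doubling because LEG and LRB give a ``cone-like bi-Lipschitz model over a finite-dimensional object'' whose directions are ``capped'' by local compactness. Nothing in the hypotheses caps the number of geodesic directions at $x$: local compactness of $X$ gives compactness of $\overline{B_R(x)}$ (via Hopf--Rinow), but compactness does not imply doubling, and the finite-dimensionality of any space of directions is exactly the kind of conclusion this theorem is meant to deliver, not an input you may assume. Second, even granting doubling, your route from Assouad's theorem to a genuine bi-Lipschitz embedding does not exist: Assouad produces a bi-Lipschitz embedding only of a \emph{snowflake} of the space, and there is no mechanism -- certainly not $\SRA(\aa)$-freeness, which merely forbids isometric copies of snowflakes as subspaces -- for removing the snowflaking exponent. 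Indeed doubling spaces in general (the Heisenberg group, Laakso spaces) admit no bi-Lipschitz embedding into any $\R^N$, so some genuinely new input is required, and your argument never supplies it. You have also inverted the paper's logical order: Theorem \ref{weakEFq} takes doubling as a hypothesis, and it is Theorem \ref{EmbDoub} that is used to furnish that hypothesis in the locally compact case, so invoking Theorem \ref{weakEFq} here is circular in spirit.

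The paper's actual proof is direct and avoids both issues. Set $R = \tfrac{1}{2}\min\{\ee,H\}$; by Hopf--Rinow the sphere $S_R(x)$ is compact, so it contains a finite $r$-net $z_1,\dots,z_N$. The embedding is simply the distance-coordinate map $\Phi(p) = (d(p,z_1),\dots,d(p,z_N)) \in \R^N$, which is trivially Lipschitz. For the lower bound one argues by contradiction: if $|d(p,z_i)-d(y,z_i)| \le \gg\, d(p,y)$ for all $i$, then LEG extends the geodesic from $p$ through $y$ to a point $w \in S_R(x)$, and Lemma \ref{FarOnSphere} (which encapsulates the LRB condition) shows $d(z_i,w) > 3r$ for every $i$, contradicting the $r$-net property. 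Doubling of $B_r(x)$ is then an immediate corollary of the embedding, which is the reverse of the direction you attempted. If you want to salvage your write-up, replace the entire Assouad/patching machinery with this single explicit map; the only analytic content needed is Lemma \ref{FarOnSphere}.
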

 
Theorem \ref{EmbDoub}(\ref{Emb}) can be considered as a generalization of \cite[Theorem 1.1]{LP}.





\subsection{Local to global for $\SRA(\aa)$-free spaces}

\begin{theorem}\label{Glob}
Let $0 < \aa < 1$ and $X$ be a compact locally $\SRA(\aa)$-free metric space. Then $X$ is $\SRA(\aa)$-free.  
\end{theorem}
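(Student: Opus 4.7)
The plan is a soft compactness argument whose only ingredient is the observation that the $\SRA(\aa)$ condition is hereditary: every subspace of an $\SRA(\aa)$ space is again $\SRA(\aa)$, because the defining inequality is a universal statement about triples of points. The strategy is therefore to cover $X$ by finitely many balls in each of which the cardinality of any $\SRA(\aa)$ subspace is uniformly bounded, and then to sum these local bounds.

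First I will use the local $\SRA(\aa)$-free hypothesis to assign to each $x \in X$ a radius $r(x) > 0$ and an integer $k(x) \in \N$ such that $B_{r(x)}(x)$ is free of $k(x)$-point $\SRA(\aa)$ subspaces. Since $X$ is compact and the open balls $B_{r(x)}(x)$ cover $X$, I extract a finite subcover
$$X = \bigcup_{i=1}^{N} B_{r(x_i)}(x_i),$$
and set $K := \max_{1 \le i \le N} k(x_i)$.

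Now let $Y \subset X$ be any $\SRA(\aa)$ subspace. For each index $i$, the intersection $Y \cap B_{r(x_i)}(x_i)$ is a subspace of $Y$ (equipped with the restricted metric), so it inherits the $\SRA(\aa)$ condition. Because it lies inside $B_{r(x_i)}(x_i)$, the local freeness forces
$$|Y \cap B_{r(x_i)}(x_i)| \le k(x_i) - 1 \le K - 1.$$
Summing over the finite cover yields $|Y| \le N(K-1)$, so $X$ is free of $(N(K-1)+1)$-point $\SRA(\aa)$ subspaces and hence is $\SRA(\aa)$-free in the sense of Definition \ref{defSRAfree}.

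There is no serious obstacle: the argument uses neither the geodesic structure nor any doubling-type property of $X$, only that $\SRA(\aa)$ is a three-point condition stable under passage to subspaces, combined with the Heine--Borel property. The one conceptual point worth flagging in the write-up is to apply the local bound to the \emph{subspace} $Y \cap B_{r(x_i)}(x_i)$ of $Y$ rather than attempting to compare $|Y|$ directly against the local constants $k(x_i)$; everything else is routine.
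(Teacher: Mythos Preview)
Your proof is correct and is essentially the same argument as the paper's: use local $\SRA(\aa)$-freeness to choose balls with bounded $\SRA(\aa)$ cardinality, extract a finite subcover by compactness, and apply the hereditary nature of the $\SRA(\aa)$ condition together with pigeonhole to get a global cardinality bound. The only cosmetic difference is the bookkeeping of the final constant (the paper sums the local $k(x_i)$ rather than taking $N$ times the maximum), which is immaterial.
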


In particular we have that compact subsets of locally $\SRA(\aa)$-free spaces are $\SRA(\aa)$-free (considered as metric spaces with metrics induced from  ambient spaces).

\begin{theorem}\label{GlobQ}
Let $X$ be a metric space, $x \in X$, $0 < \aa < 1$, $0 < r < R$, $k \in \N$. Suppose that $B_R(x)$ satisfies doubling condition with a constant $\ll \in \N$ and for every $y \in B_R(x)$ $B_r(x)$ is free of $k$-point $SRA(\aa)$ subspaces.  Then  
$B_R(x)$ is free of $k\ll^{\left \lceil{\log_2(\frac{R}{r})}\right \rceil}$-point $\SRA(\aa)$ subspaces, where
$\left \lceil{\cdot}\right \rceil $  denotes the rounded up integer part.
\end{theorem}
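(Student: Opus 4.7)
The plan is a covering/pigeonhole argument built on iterated doubling, combined with the elementary fact that the $\SRA(\aa)$ condition, being a statement about triples of points, is inherited by subsets. Set $n := \left\lceil \log_2(R/r)\right\rceil$, so that $R/2^n \leq r$. First I would iterate the doubling property $n$ times, starting from the single ball $B_R(x)$: each step refines the current cover of $B_R(x)$ by replacing every ball of radius $\rho$ by $\ll$ balls of radius $\rho/2$. After $n$ steps this produces a cover of $B_R(x)$ by $\ll^n$ balls of radius at most $r$. Interpreting the doubling hypothesis as a statement about $B_R(x)$ endowed with its induced metric guarantees that all the centers $y_1,\dots,y_{\ll^n}$ can be taken to lie inside $B_R(x)$, so the covering may be written as $\{B_r(y_j)\}_{j=1}^{\ll^n}$ with each $y_j\in B_R(x)$.

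Next, suppose for contradiction that $Y\subset B_R(x)$ has cardinality $k\ll^n$ and satisfies the $\SRA(\aa)$ condition. The pigeonhole principle applied to the cover yields some index $j$ with $|Y\cap B_r(y_j)| \geq k$. Pick any subset $Y' \subset Y\cap B_r(y_j)$ of cardinality exactly $k$. Since $\SRA(\aa)$ is defined in terms of individual triples $x,z,y$, every subset of a space satisfying $\SRA(\aa)$ itself satisfies $\SRA(\aa)$; in particular $Y'$ is a $k$-point $\SRA(\aa)$ subspace of $B_r(y_j)$. As $y_j\in B_R(x)$, this contradicts the hypothesis that $B_r(y_j)$ contains no $k$-point $\SRA(\aa)$ subspace.

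The argument is essentially routine once the doubling iteration is carried out inside $B_R(x)$; the only step requiring care is ensuring that the $\ll^n$ final covering balls have centers in $B_R(x)$, so that the freeness hypothesis at each $y_j$ is actually available. Under the natural reading of the doubling condition (that $B_R(x)$ with the induced metric is doubling with constant $\ll$) this is automatic, and no extra multiplicative factor appears in the bound, matching the stated count $k\ll^{\lceil \log_2(R/r)\rceil}$ exactly.
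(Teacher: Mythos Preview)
Your proposal is correct and follows essentially the same route as the paper: iterate the doubling condition $\left\lceil \log_2(R/r)\right\rceil$ times to cover $B_R(x)$ by $\ll^{\left\lceil \log_2(R/r)\right\rceil}$ balls of radius $r$ with centers in $B_R(x)$, then apply the pigeonhole principle together with the hereditary nature of the $\SRA(\aa)$ condition. Your write-up is in fact more careful than the paper's (which leaves the inheritance step and the location of the centers implicit), but the argument is the same.
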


\subsection{Non embeddability of snowflakes}
As a direct implication of Proposition \ref{SAinSN} we have the following.
\begin{Corollary}
For $0 < \aa < 1$ an $\aa$-snowflake of a metric space containing at least $N$ points does not embed isometrically 
into any metric space which is free of $N$-point $\SRA(\aa)$ subspaces.
\end{Corollary}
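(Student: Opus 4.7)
The plan is to derive the statement as an immediate contradiction argument from Proposition \ref{SAinSN}, exploiting the fact that the $\SRA(\aa)$ condition is purely a statement about mutual distances of triples, hence is preserved under both isometric embedding and passage to subspaces.

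More concretely, let $(Y,d)$ be a metric space with $|Y| \ge N$, let $X$ be a target metric space that is free of $N$-point $\SRA(\aa)$ subspaces, and suppose toward a contradiction that there is an isometric embedding $\varphi:(Y,d^\aa) \to X$. First I would invoke Proposition \ref{SAinSN} to conclude that the snowflake $(Y,d^\aa)$ satisfies the $\SRA(\aa)$ condition. Second, I would observe that for any three points $x,y,z \in Y$ the inequality
\[
d^\aa(x,y) \le \max\{d^\aa(x,z) + \aa d^\aa(z,y),\ \aa d^\aa(x,z) + d^\aa(z,y)\}
\]
transfers verbatim, via $\varphi$, to the corresponding inequality in $X$, since $\varphi$ preserves all pairwise distances. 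Hence the image $\varphi(Y) \subset X$, equipped with the induced metric from $X$, is an $\SRA(\aa)$ space with at least $N$ points.

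Third, I would select any $N$-point subset $Z \subset \varphi(Y)$. Because the $\SRA(\aa)$ inequality is required only for triples, and all triples from $Z$ lie in $\varphi(Y)$, the subspace $Z$ inherits the $\SRA(\aa)$ condition. This exhibits an $N$-point $\SRA(\aa)$ subspace of $X$, contradicting the hypothesis that $X$ is free of such subspaces.

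There is no real obstacle here; the only point that requires a moment of care is the stability of the $\SRA(\aa)$ condition under taking subspaces, which is immediate from its triple-wise formulation, and the compatibility of $\varphi$ with the snowflake metric on the domain (the defining identity $d_X(\varphi(x),\varphi(y)) = d(x,y)^\aa$ is exactly what makes the transfer of the inequality work). The statement is therefore a genuine one-line corollary of Proposition \ref{SAinSN}.
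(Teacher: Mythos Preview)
Your proposal is correct and is exactly the argument the paper has in mind: the corollary is stated as ``a direct implication of Proposition \ref{SAinSN}'' with no separate proof, and your spelled-out contradiction (snowflake is $\SRA(\aa)$ by Proposition \ref{SAinSN}, the condition transfers under isometric embedding and passes to subspaces, yielding an $N$-point $\SRA(\aa)$ subspace of $X$) is precisely that implication made explicit.
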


\section{Notation}
Let $X$ be a metric space. $X$ is said to be \textit{geodesic space} if for every pair of points there exists a continuous curve 
$\gg: [0,1] \rightarrow X$ such that $$d(x,y) = L(\gg) \overset{def}{=} 
\sup_{n \in \N} \sup_{0 \le t_1 < \dots < t_n \le 1}\sum_{i=1}^{n-1}d(\gg(t_i),\gg(t_{i+1})).$$

For a metric space $X$, $x \in X$ and $R > 0$ we denote by $B_R(x)$, $\overline{B_R(x)}$ and $S_R(x)$
an open ball, a closed ball and a sphere with radius $R$ and center $x$ respectively. 
 
For a metric space $(Y,d)$ an $0 < \bb < 1$ we denote by $(Y,d^\bb)$ a $\bb$-snowflake of $(Y,d)$ which metric is given by 
$$d_{(Y,d^\bb)}(y_1,y_2) = \Big (d_{(Y,d)}(y_1,y_2)\Big )^\bb.$$

\section{Proof of Theorem \ref{SRAinTC}}
\begin{lemma}\label{2lemma}
Let $X = \{x_1,\dots,x_n\}$ be a $\DSE$ space. Suppose that $1 \le i \le j \le k \le l \le n$ then 
$$d(x_j, x_k) \le 2d(x_i, x_l).$$
\end{lemma}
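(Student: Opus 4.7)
The plan is to combine the triangle inequality with two applications of the $\DSE$ condition. Since we want to bound $d(x_j, x_k)$ in terms of $d(x_i, x_l)$, and the points $x_j, x_k$ both lie ``between'' $x_i$ and $x_l$ in index order, the natural route is to go from $x_j$ to $x_k$ via $x_i$.

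First I would write the triangle inequality
\[
d(x_j, x_k) \le d(x_j, x_i) + d(x_i, x_k) = d(x_i, x_j) + d(x_i, x_k).
\]
Then I would bound each of the two summands by $d(x_i, x_l)$ using the $\DSE$ hypothesis: the chain $i \le j \le l$ gives $d(x_i, x_j) \le d(x_i, x_l)$, and the chain $i \le k \le l$ gives $d(x_i, x_k) \le d(x_i, x_l)$. Adding the two estimates yields $d(x_j, x_k) \le 2 d(x_i, x_l)$, as required.

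There is essentially no obstacle; this is a direct three-line computation and is presumably recorded only to be reused freely in later arguments (and to justify the remark that $\diam(X) \le 2 D(X) \le \diam(X)$, which follows by taking $i=1$, $l=n$ and maximizing over $j, k$).
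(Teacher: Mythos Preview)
Your proof is correct and essentially the same as the paper's: both combine one application of the triangle inequality with two applications of the $\DSE$ condition. The paper orders the steps slightly differently, writing $d(x_j,x_k)\le d(x_j,x_l)\le d(x_i,x_j)+d(x_i,x_l)\le 2d(x_i,x_l)$, but this is an immaterial variation on your argument.
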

\begin{proof}
The proof is a straightforward chain of inequalities based on the definition of  a $\DSE$ space and the triangle inequality,
$$d(x_j, x_k) \le d(x_j, x_l) \le d(x_i, x_j) + d(x_i, x_l) \le 2d(x_i, x_l).$$
\end{proof}

Consider a $\DSE$ space $X = \{x_1,\dots, x_n\}$. For every $1 \le i < j < k \le n$
$$d(x_i,x_j) \le d(x_i,x_k) \le \max\{d(x_i,x_k) + \aa d(x_j,x_k), \aa d(x_i,x_k) + d(x_j,x_k)\}.$$
Thus, the following two condition imply $\SRA(\aa)$ condition for $X$
\begin{enumerate}
\item{$d(x_i,x_k) \le d(x_i,x_j) + \aa d(x_j,x_k)$, for every $1 \le i < j < k \le n$, } \label{c1}
\item{$d(x_j,x_k) \le d(x_i,x_k) + \aa d(x_i,x_j)$, for every $1 \le i < j < k \le n$, } \label{c2}
\end{enumerate}   
Our proof of Theorem \ref{SRAinTC} consists of two steps. First we find a subset of $\DSE$ space satisfying (\ref{c1})
by applying Lemma \ref{StandAngles}. Then by Lemma \ref{WeirdAngles} we find a subset of this new set which satisfies both (\ref{c1}) and (\ref{c2}).

\begin{lemma}\label{StandAngles}
Suppose that $0 < \tt < 1$, $m \in \N$ and  $X = \{x_1,\dots,x_n\}$ is a $\DSE$ space
 with $$L(X) \ge C(m, \tt) D(X),$$ 
 where  $C(m, \tt) =\big(\frac{  (m(m-1))^{m-1} }{\tt^{m-2}} + 2m\big)$.
Then there exist $1 \le a_1 < \dots < a_m \le n$ such that for every $1 \le i < j < k \le m$ we have $$d(x_{a_i},x_{a_k}) \le d(x_{a_i},x_{a_j}) + \tt d(x_{a_j},x_{a_k}).$$
\end{lemma}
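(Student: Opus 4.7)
My plan is to argue by induction on $m$. For the base case $m = 2$, the condition involves no triples $i < j < k \le 2$, so any choice $a_1 < a_2$ suffices; the hypothesis $L(X) \ge C(2, \theta) D(X) > 0$ guarantees $n \ge 2$.

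For the inductive step I would first establish the case $m = 3$ as a model. Fix $a_1 = 1$ and suppose, for contradiction, that no pair $2 \le a_2 < a_3 \le n$ gives the desired inequality. Then it fails in particular for all consecutive pairs $(i, i+1)$, i.e., $d(x_1, x_{i+1}) > d(x_1, x_i) + \theta d(x_i, x_{i+1})$. Summing these strict inequalities over $i = 2, \ldots, n-1$ telescopes the left side to $d(x_1, x_n) - d(x_1, x_2) \le D(X)$, while the right side equals $\theta (L(X) - d(x_1, x_2))$. This forces $L(X) < D(X)/\theta + D(X)$, contradicting the hypothesis once $C(m,\theta)$ is large enough.

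For general $m$, my plan is to iterate this summation trick. I would restrict to a sub-$\DSE$ space $X' \subseteq X$ consisting of indices along which the cross-condition with $x_1$ holds; the summation argument caps the total length of \emph{bad} indices by $D(X)/\theta$, so $L(X')/D(X')$ remains large enough to apply the inductive hypothesis with parameter $m-1$, yielding $a_2 < \cdots < a_m$ in $X'$ satisfying the rough angle condition among themselves. Prepending $a_1 = 1$ should give the desired $m$-tuple. The additive $2m$ in $C(m,\theta)$ accumulates one ``$+2D(X)$'' of slack per inductive step (for the Lemma~\ref{2lemma} bound $D(X') \le 2D(X)$), while the multiplicative part reflects the cost of the restriction.

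The hard part I anticipate, and the source of the exponential constant $C(m,\theta) = (m(m-1))^{m-1}/\theta^{m-2} + 2m$, is that the cross-triple conditions $(1, a_j, a_k)$ for \emph{non-consecutive} pairs $k > j+1$ are not automatic from the consecutive versions: chaining via the triangle inequality $d(x_{a_j}, x_{a_k}) \le \sum_i d(x_{a_{i}}, x_{a_{i+1}})$ goes the wrong direction for a clean telescoping. Handling this requires a refined notion of ``good index'', essentially controlling not only consecutive cross-conditions but all $\binom{m-1}{2}$ of them simultaneously. The combinatorial factor $m(m-1)$ per inductive step accounts for the number of such cross-triples to enforce, while the $\theta^{-1}$ factor per step originates from the summation trick; together they produce the stated constant.
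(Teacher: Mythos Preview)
Your $m\le 3$ argument is fine, but the inductive step for $m\ge 4$ has a real gap that you yourself flag and do not close: you never specify the restricted space $X'$. The natural candidate---the indices $j$ for which the \emph{consecutive} cross-condition $d(x_1,x_{j+1})\le d(x_1,x_j)+\theta\, d(x_j,x_{j+1})$ holds---does bound the discarded length by $D(X)/\theta$ via telescoping, but membership in this set says nothing about the \emph{non-consecutive} conditions $d(x_1,x_{a_k})\le d(x_1,x_{a_j})+\theta\, d(x_{a_j},x_{a_k})$ once you pass to a sparse subsequence $a_2<\dots<a_m$ inside $X'$: chaining the consecutive inequalities only gives $d(x_1,x_{a_k})\le d(x_1,x_{a_j})+\theta\sum_{l=a_j}^{a_k-1} d(x_l,x_{l+1})$, and that sum can vastly exceed $d(x_{a_j},x_{a_k})$. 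The alternative---taking $X'$ to consist of indices $j$ that are good for \emph{every} later $k$---makes the cross-conditions with $x_1$ automatic, but then the telescoping bound on the discarded length collapses, because badness of $j$ is witnessed by an arbitrary $k(j)>j$ and the increments $d(x_1,x_{k(j)})-d(x_1,x_j)$ no longer sum to anything controlled. Neither choice of $X'$ works, and your ``refined notion of good index'' is left as a placeholder rather than a definition, so the induction does not go through as written.

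The paper avoids induction on $m$ entirely. It argues by contradiction and runs a sliding-window process: start with the window $P^1=\{1,\dots,m\}$; whenever the current window $P^t$ contains a bad triple $(i,j,k)$, delete the indices of $P^t$ lying in $[j,k)$ and append the same number of fresh indices $p_m^t+1,\dots$ on the right. The engine is a weighted potential $S^t=\sum_{1\le a<b\le m}c_a\,d(x_{p_a^t},x_{p_b^t})$ with geometrically decreasing weights $c_a=(m(m-1)/\theta)^{\,m-1-a}$. The bad-triple inequality at level $i$, together with the weight ratio $c_{a}/c_{a+1}=m(m-1)/\theta$, forces $S^{t+1}-S^t$ to dominate the length of the newly absorbed segment $\sum_{u=p_m^t}^{p_m^{t+1}-1}d(x_u,x_{u+1})$; summing over $t$ traps $L(X)$ by $S^T+2mD(X)$. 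Since each $d(x_{p_a^T},x_{p_b^T})\le 2D(X)$ by Lemma~\ref{2lemma}, one gets $L(X)<\big((m(m-1))^{m-1}/\theta^{m-2}+2m\big)D(X)$. The exponential constant thus comes from the top weight $c_1$, not from iterating an $m$-step induction.
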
 
\begin{proof}
By contradiction we suppose that for every $1 \le a_1 < \dots < a_m \le n$ there exist $i < j < k$ such that
\begin{equation} d(x_{a_i},x_{a_k}) > d(x_{a_i},x_{a_j}) + \tt d(x_{a_j},x_{a_k}). \label{eq} \end{equation}
We define inductively a sequence of $m$-point subsets $P^1,\dots,P^T \subset \{1,\dots,n\}$.
We define $P^1$ by 
$$P^1 = \{1,\dots,m\}.$$
For every $t = 1,\dots,T$ 	we are going to use the following notation for elements of $P^t$,
$$P^t = \{{p_1^t},\dots,{p_m^t}\},$$
where ${p_1^t} <\dots < {p_m^t}$. 
Suppose that we already defined $P^t$ and now we have to define $P^{t+1}$.
By (\ref{eq}) we can fix $i(t), j(t), k(t) \in P^t$ such that $i(t) < j(t) < k(t)$ and 
\begin{equation} d(x_{ i(t)},x_{ k(t)}) > d(x_{ i(t)},x_{ j(t)}) + \tt d(x_{ j(t)},x_{ k(t)}). \label{eqq} \end{equation}
We denote by $d(t)$ the size of the set $P^t \cap [j(t),k(t))$. In case if $d + p_m^t > n$ we stop the process and say that $T = t$.
Otherwise we define $P^{t+1}$ by 
$$P^{t+1} = \big( P^t \setminus [j(t),k(t)) \big) \cup \{p_m^t +1,\dots,p_m^t +d(t)\}.$$
Now when we finished defining the sequence $P^1,\dots,P^{T}$ we are going to define weights $c_1,\dots,c_{m-1} > 0$.
We take $c_{m-1} = 1$. Other weights $c_1,\dots,c_{m-2}$ should be such that $c_u$ is much bigger then $c_{u+1}$ for every $u = 1,\dots,m-2$.
More precisely we take
$$c_u = \frac{m(m-1)}{\tt}c_{u+1}.$$
So a direct formula is,
$$c_u = \Big(\frac{m(m-1)}{\tt}\Big)^{m-1-u}.$$

For $t = 1,\dots,T$ we define $S^t$ by
$$S^t = \sum_{1 \le a < b \le m}c_ad(x_{p^t_a}, x_{p^t_b}).$$  
We claim that for every $t = 1,\dots,T-1$ we have 
\begin{equation} S^t \le  S^{t+1} - \sum_{u = p_m^t}^{u= p_m^{t+1} -1 }d(x_{u},x_{u+1}). \label{CL} \end{equation}
\begin{proof}
We chop $S^t$ into $3$-pieces in order to bound them separately 
$$S^t =\sum_{\substack{{1 \le a < b \le m}\\
                  p^t_a < j(t)}}
         c_ad(x_{p^t_a}, x_{p^t_b})
          + \sum_{\substack{{1 \le a < b \le m}\\
                  p^t_a = j(t)}}
         c_ad(x_{p^t_a}, x_{p^t_b}) + 
\sum_{\substack{{1 \le a < b \le m}\\
                  p^t_a > j(t)}}
         c_ad(x_{p^t_a}, x_{p^t_b}).$$
         
In the case $p^t_a < j(t)$ we have that $p^t_a = p^{t+1}_a$ and $p^t_b \le p^{t+1}_b$. This implies, 
$$d(x_{p^t_a}, x_{p^t_b}) \le d(x_{p^{t+1}_a}, x_{p^{t+1}_b}).$$
Applying (\ref{eqq}) we obtain 
$$d(x_{p^t_a}, x_{p^t_b}) \le d(x_{p^{t+1}_a}, x_{p^{t+1}_b}) - \tt d(x_{j(t)}, x_{k(t)}),$$ 
for the special case $p^t_a = i(t)$ and $p^t_b = j(t)$.
Summing all those inequalities provides 
\begin{equation}\sum_{\substack{{1 \le a < b \le m}\\
                  p^t_a < j(t)}}
         c_ad(x_{p^t_a}, x_{p^t_b}) \le 
         \sum_{\substack{{1 \le a < b \le m}\\
                  p^t_a < j(t)}}
         c_ad(x_{p^{t+1}_a}, x_{p^{t+1}_b})  - \tt c_{\hat a}d(x_{j(t)}, x_{k(t)}),\label{1ndPF}\end{equation}
         where $\hat a$ is such that $p^t_{\hat a} = i(t)$.

Now we are going to provide a bound for 
$$\sum_{\substack{{1 \le a < b \le m}\\
                  p^t_a = j(t)}}
         c_ad(x_{p^t_a}, x_{p^t_b}).$$

In the case  $p^t_b \le k(t)$ from the definition of $\DSE$ space we have 
$$d(x_{p^t_a}, x_{p^t_b}) \le d(x_{j(t)},x_{k(t)}).$$
In the case $p^t_b \ge k(t)$ from the triangle inequality for $\Delta x_{p^t_a} x_{k(t)} x_{p^t_b}$ we have 
$$d(x_{p^t_a}, x_{p^t_b}) \le d(x_{j(t)},x_{k(t)}) + d(x_{k(t)}, x_{p^t_b}).$$
Summing those inequalities provides 
\begin{equation}\sum_{\substack{{1 \le a < b \le m}\\
                  p^t_a = j(t)}}
         c_ad(x_{p^t_a}, x_{p^t_b}) \le 
         \Big(\sum_{\substack{{1 \le a < b \le m}\\
                  p^t_a = j(t)}}
         c_a\Big)d(x_{j(t)},x_{k(t)}) + 
         \Big(\sum_{\substack{{p^t_a = j(t)}\\
                  p^t_b > k(t)}}
         c_a\Big)d(x_{k(t)},x_{p^t_b}).\label{2ndP}\end{equation}
Note that if $p^t_a = j(t)$ then $p^{t+1}_a = k(t)$ and also note that 
$$\{ u \in P^{t+1}| u > k(t)\} \supseteq \big(\{ u \in P^{t}| u > k(t)\} \cup \{p^t_m+1\}\big).$$
Hence,
$$\Big(\sum_{\substack{{p^t_a = j(t)}\\
                  p^t_b > k(t)}}
         c_a\Big)d(x_{k(t)},x_{p^t_b}) \le 
         \sum_{\substack{{1 \le a < b \le m}\\
                  p^t_a = j(t)}}
         c_ad(x_{p^{t+1}_a}, x_{p^{t+1}_b}) - c_{\wt a}d(x_{k(t)},x_{p^t_m + 1}),
         $$
         where $1 \le \wt a \le m$ is such that  $p^t_{\wt a} = j(t)$.
Substituting the last inequality into (\ref{2ndP}) provides
\begin{equation}\sum_{\substack{{1 \le a < b \le m}\\
                  p^t_a = j(t)}}
         c_ad(x_{p^t_a}, x_{p^t_b}) \le 
         \Big(\sum_{\substack{{1 \le a < b \le m}\\
                  p^t_a = j(t)}}
         c_a\Big)d(x_{j(t)},x_{k(t)}) + $$$$+
         \sum_{\substack{{1 \le a < b \le m}\\
                  p^t_a = j(t)}}
         c_ad(x_{p^{t+1}_a}, x_{p^{t+1}_b}) - c_{\wt a}d(x_{k(t)},x_{p^t_m + 1}).\label{2ndPF}\end{equation}
        
         Finally we are going to provide a bound for $$\sum_{\substack{{1 \le a < b \le m}\\
                  p^t_a > j(t)}}
         c_ad(x_{p^t_a}, x_{p^t_b}).$$         
         If $p_a^t > j(t)$ then by the triangle inequality and Lemma \ref{2lemma} we have,
        $$d(x_{p^t_a}, x_{p^t_b}) \le d(x_{p^t_a},x_{k(t)}) + d(x_{k(t)},  x_{p^t_b}) \le 2d(x_{j(t)},x_{k(t)}) +  d(x_{k(t)},  x_{p^t_m}).$$
        In the case  $p_b^t \le k(t)$ this can be improved to 
         $$d(x_{p^t_a}, x_{p^t_b})   \le 2d(x_{j(t)},x_{k(t)}). $$      
        We conclude that 
        \begin{equation}\sum_{\substack{{1 \le a < b \le m}\\
                  p^t_a > j(t)}}
         c_ad(x_{p^t_a}, x_{p^t_b}) \le 
         2d(x_{j(t)},x_{k(t)})\sum_{\substack{{1 \le a < b \le m}\\
                  p^t_a > j(t)}}
         c_a +
          d(x_{k(t)},  x_{p^t_m})\sum_{\substack{{1 \le a < b \le m}\\
                  p^t_a > j(t),
                  p^t_b > k(t)
                  }}
         c_a.\label{3ndPF} \end{equation}
         
        Now we can bound $S^t$ by summing up  (\ref{1ndPF}), (\ref{2ndPF}) and (\ref{3ndPF}),
         \begin{equation}S^t \le  
         \sum_{\substack{{1 \le a < b \le m}\\
                  p^t_a < j(t)}}
         c_ad(x_{p^{t+1}_a}, x_{p^{t+1}_b}) +
         \sum_{\substack{{1 \le a < b \le m}\\
                  p^t_a = j(t)}}
         c_ad(x_{p^{t+1}_a}, x_{p^{t+1}_b}) - R_1  - R_2,\label{summV1}\end{equation} where
         $$ R_1 = 
         d(x_{j(t)},x_{k(t)})(\tt c_{\hat a} - \sum_{\substack{{1 \le a < b \le m}\\
                  p^t_a = j(t)}}
         c_a - 2 \sum_{\substack{{1 \le a < b \le m}\\
                  p^t_a > j(t)}}
         c_a),$$
         $$R_2 =   (c_{\wt a}d(x_{k(t)},x_{p^t_m + 1}) - d(x_{k(t)},  x_{p^t_m})\sum_{\substack{{1 \le a < b \le m}\\
                  p^t_a > j(t),
                  p^t_b > k(t)}}
         c_a),$$
         $$\text{$1 \le \hat a, \wt a \le m$ are such that $p^t_{\hat a} = i(t)$, $p^t_{\wt a} = j(t)$}.$$
We can rewrite (\ref{summV1}) as
$$S^t \le  S^{t+1}
          - \sum_{\substack{{1 \le a < b \le m}\\
                  p^t_a > j(t)}}
         c_ad(x_{p^{t+1}_a}, x_{p^{t+1}_b})  - R_1  - R_2.$$
         
Since $\hat a < a$ for every $1 \le a \le n$ such that $p^t_a \ge j(t)$. Thus $c_{\hat a} \ge \frac{m(m-1)}{\tt}c_a$ and $R_1 \ge 0$.
Thus, 
$$S^t \le  S^{t+1}
          - \sum_{\substack{{1 \le a < b \le m}\\
                  p^t_a > j(t)}}
         c_ad(x_{p^{t+1}_a}, x_{p^{t+1}_b}) - R_2.$$
Now there are two cases:

\textit{Case A:  $k(t) < p^t_m$.} Then, 
$$\sum_{\substack{{1 \le a < b \le m}\\
                  p^t_a > j(t)}}
         c_ad(x_{p^{t+1}_a}, x_{p^{t+1}_b}) \ge \sum_{u = p_m^t}^{u= p_m^{t+1} -1 }d(x_{u},x_{u+1}).$$
To finish the proof of the claim in this case it suffices to show that,
$$R_2 \ge 0.$$

Since $\wt a < a$ for every $1 \le a \le n$ such that $p^t_a > j(t)$. Thus $c_{\wt a}\ge \frac{m(m-1)}{\tt}c_a \ge m(m-1)c_a$ and $R_2 \ge 0$.

\textit{Case B:  $k(t) = p^t_m$.}
In this case 
$$R_2 = c_{\wt a}d(x_{k(t)},x_{p^t_m + 1}) \ge d(x_{p^t_m},x_{p^t_m + 1}).$$
We also have 
$$\sum_{\substack{{1 \le a < b \le m}\\
                  p^t_a > j(t)}}
         c_ad(x_{p^{t+1}_a}, x_{p^{t+1}_b})  \ge \sum_{u = p_m^t + 1}^{u= p_m^{t+1} -1 }d(x_{u},x_{u+1}),$$
and the claim follows. 
\end{proof}
We sum inequalities (\ref{CL}) for $t = 1, \dots, T - 1$ and get, 
$$S^1 \le S^T -  \sum_{u = m}^{u= p_m^{T} -1 }d(x_{u},x_{u+1}).$$
Which implies 
$$\sum_{u = 1}^{u=m -1 }d(x_{u},x_{u+1}) \le S^T -  \sum_{u = m}^{u= n -1 }d(x_{u},x_{u+1}) +  \sum_{u= p_m^{T}}^{u = n-1}d(x_{u},x_{u+1}).$$
Thus, 
\begin{equation}L(X) \le S^T +  \sum_{u= p_m^{T}}^{u = n-1}d(x_{u},x_{u+1}) = \sum_{1 \le a < b \le m}c_ad(x_{p^T_a}, x_{p^T_b}) + \sum_{u= p_m^{T}}^{u = n-1}d(x_{u},x_{u+1}).\label{dc}\end{equation}
By Lemma \ref{2lemma} we have for every $1 \le u < v \le n$,
$$d(x_u,x_v) \le 2d(x_1,x_n) = 2D(X).$$
We apply the last inequality to  (\ref{dc}), which provides
$$L(X) < (m(m-1)c_1 + 2m) D(X) = C(m,\tt)D(X).$$
Thus, we have a contradiction with the assumption of the lemma.
\end{proof}

\begin{lemma}\label{WeirdAngles}
Let $0 < \tt < 1$ and $\aa  > \frac{1}{2}\frac{1+\tt}{1-\tt}$ then there exists $n = n(\tt,\aa)$ such that 
there is no $Z = \{z_1,\dots,z_n\}$ $\DSE$ space such that 
\begin{enumerate}
\item{$d(z_{i},z_{k}) \le d(z_{i},z_{j}) + \tt d(z_{j},z_{k})$, for every $1 \le i < j < k \le n$. \label{c1}}
\item{$d(z_n,z_{i+1}) \ge d(z_n,z_{i}) + \aa d(z_{i},z_{i+1})$, for every $1 \le i \le n - 1$. \label{c2}}
\end{enumerate}
\end{lemma}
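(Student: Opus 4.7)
The plan is to track the sequence $a_i := d(z_n, z_i)$, $1 \le i \le n$; I will show that conditions (1) and (2) together force geometric growth $a_{i+1} \ge \ll a_i$ with $\ll := (1 + \aa)/(1 + \aa \tt) > 1$, while the $\DSE$ axiom forces $a_{n-1} \le 2 a_1$. These two estimates become incompatible once $n$ is large, yielding the required $n(\tt, \aa)$.

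The central step is to fix $1 \le i \le n-2$ and view both conditions as opposing bounds on the single length $d(z_i, z_{i+1})$. Applying (1) to the triple $(z_i, z_{i+1}, z_n)$ gives $a_i \le d(z_i, z_{i+1}) + \tt a_{i+1}$, hence the lower bound $d(z_i, z_{i+1}) \ge a_i - \tt a_{i+1}$; rewriting (2) gives the upper bound $d(z_i, z_{i+1}) \le (a_{i+1} - a_i)/\aa$. Chaining these yields $(1 + \aa) a_i \le (1 + \aa \tt) a_{i+1}$, i.e.\ $a_{i+1} \ge \ll a_i$; since $0 < \tt < 1$, $\ll > 1$.

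Iterating $a_{i+1} \ge \ll a_i$ through $i = 1, \dots, n-2$ yields $a_{n-1} \ge \ll^{n-2} a_1$. For the opposing direction, $\DSE$ applied to $1 \le n-1 \le n$ gives $d(z_1, z_{n-1}) \le d(z_1, z_n) = a_1$, and the triangle inequality on $(z_1, z_{n-1}, z_n)$ gives $a_{n-1} \le d(z_1, z_{n-1}) + a_1 \le 2 a_1$. If $a_1 = 0$, then $\DSE$ collapses $Z$ to the single point $z_n$, contradicting the $n$-point assumption; otherwise $\ll^{n-2} \le 2$, bounding $n$ by $2 + (\log 2)/(\log \ll)$. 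Taking $n(\tt,\aa)$ larger than this value yields the lemma.

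The main obstacle is spotting the multiplicative step -- writing (1) and (2) as matching bounds on $d(z_i, z_{i+1})$ and eliminating it to isolate the ratio $a_{i+1}/a_i$. Once this is in hand, the iteration and the $\DSE$-plus-triangle upper bound are routine. The explicit numerical hypothesis $\aa > \tfrac{1}{2}\tfrac{1 + \tt}{1 - \tt}$ does not appear essential for the qualitative conclusion above (only $\ll > 1$, i.e.\ $\tt < 1$, is used); I expect it either to sharpen the constant $n(\tt,\aa)$ or to arise naturally when Lemma \ref{WeirdAngles} is fed into the proof of Theorem \ref{SRAinTC} so that the resulting SRA parameter comes out equal to $\aa$.
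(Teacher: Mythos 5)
Your proof is correct, and it takes a genuinely different route from the paper's. The paper applies hypothesis (1) to \emph{consecutive} triples $(z_k,z_{k+1},z_{k+2})$, combines it with the triangle inequality to get $d(z_k,z_{k+1}) \ge \frac{1-\theta}{2}\,d(z_{k+1},z_{k+2})$, and then telescopes hypothesis (2) over $i=1,\dots,n-2$; the resulting geometric series is exactly where the threshold $\frac{1}{2}\frac{1+\theta}{1-\theta}$ comes from, since the paper's argument only yields a contradiction once $\alpha$ exceeds the limit of that series. You instead anchor hypothesis (1) at the endpoint, applying it to the triples $(z_i,z_{i+1},z_n)$, and eliminate $d(z_i,z_{i+1})$ against (2) \emph{pointwise} rather than by summation, obtaining the multiplicative growth $a_{i+1}\ge\frac{1+\alpha}{1+\alpha\theta}\,a_i$ of the distances $a_i=d(z_i,z_n)$, which then collides with the $\DSE$-plus-triangle bound $a_{n-1}\le 2a_1$. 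Your observation that the hypothesis $\alpha>\frac{1}{2}\frac{1+\theta}{1-\theta}$ is never used is accurate: your argument proves the conclusion for every $\alpha>0$ and $0<\theta<1$, which is strictly stronger than the stated lemma and would in principle let one replace the choice ``$\theta<\frac{1}{2}$ with $\alpha>\frac{1}{2}\frac{1+\theta}{1-\theta}$'' at the start of the proof of Theorem \ref{SRAinTC} by merely $\theta\le\alpha$, i.e.\ potentially relax the restriction $\alpha>\frac{1}{2}$ there. Two minor remarks: both proofs need $d(z_1,z_n)>0$, which holds because $Z$ is a genuine $n$-point metric space (you handle this explicitly; the paper divides by $L+a$ without comment), and both proofs use condition (2) only for $i\le n-2$ --- as literally stated with $i=n-1$ it would force $d(z_{n-1},z_n)=0$, so the stated range is evidently a typo that neither argument relies on.
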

\begin{proof}
Suppose that $Z$ is DSE space satisfying (\ref{c1}) and (\ref{c2}).
From (\ref{c2}) we have 
$$d(x_1,x_n) \le d(x_2,x_n) \le \dots \le d(x_{n-1},x_n).$$
Thus,
$$d(x_{n-1},x_n) - d(x_1,x_n) \ge 0.$$
We use the following notation 
$$L = d(x_1,x_n)\text{ and }a = d(x_{n-1},x_n) - d(x_1,x_n).$$
\textit{Claim:} for every $1 \le i \le n -1$ we have 
\begin{equation}d(x_i,x_{i+1}) \ge \Big(\frac{1 - \tt}{2}\Big)^{n-1-i}(L+a).\label{pf}\end{equation}

We prove the claim by induction. The base $i = n - 1$ is trivial. Suppose that the claim is proven for $i = k+1$ and we have to 
 validate it for $i = k$. 
From $(\ref{c1})$ we have $$d(x_k ,x_{k+2}) \le d(x_k, x_{k+1}) + \tt d(x_{k+1},x_{k+2}).$$ 
Triangle inequality for $\Delta x_k x_{k+1} x_{k+2}$ provides
$$d(x_{k+1},x_{k+2}) -  d(x_{k},x_{k+1}) \le  d(x_k, x_{k+2}).$$
Combining those inequalities we obtain 
$$d(x_{k+1},x_{k+2}) -  d(x_{k},x_{k+1}) \le d(x_k, x_{k+1}) + \tt d(x_{k+1},x_{k+2}).$$ 
Thus, 
$$d(x_k, x_{k+1}) \ge \Big(\frac{1-\tt}{2}\Big) d(x_{k+1},x_{k+2}).$$ 
This provides the claim.

From (\ref{c2}) we have 
$$d(x_n, x_1) + \aa d(x_1, x_2) \le d(x_2, x_n),$$ 
$$d(x_n, x_2) + \aa d(x_2, x_3) \le d(x_3, x_n),$$ 
$$\dots$$
$$d(x_n, x_{n-2}) + \aa d(x_{n-2}, x_{n-1}) \le d(x_{n-1}, x_n).$$ 

We sum these inequalities and get
$$d(x_n,x_1) + \aa (d(x_1, x_2) + d(x_2, x_3) + \dots + d(x_{n-2}, x_{n-1})) \le d(x_{n-1}, x_n). $$
Substituting (\ref{pf}) provides 
$$L + \aa  \Big(\frac{1 - \tt}{2}\Big) \Big( \Big(\frac{1 - \tt}{2}\Big)^{n-2} + \dots + 1\Big)(L + a) \le L + a.$$ 
The last inequality is equivalent to 
$$L \le (L+a)\Big(1 - \aa \Big(\frac{1 - \tt}{2}\Big)\Big(1 + \dots + \Big(\frac{1 - \tt}{2}\Big)^{n-2} \Big)\Big).$$ 
By Lemma \ref{2lemma} we have
$a \le L.$
Thus,
$$1 \le 2\Big(1 - \aa \Big(\frac{1 - \tt}{2}\Big)\Big(1 + \dots + \Big(\frac{1 - \tt}{2}\Big)^{n-2} \Big)\Big).$$ 
Which is equivalent to 
$$\aa \le \frac{1}{2\Big(\frac{1 - \tt}{2}\Big)\Big(1 + \dots + \Big(\frac{1 - \tt}{2}\Big)^{n-2} \Big)}.$$
Note that $\aa  > \frac{1}{2}\frac{1+\tt}{1-\tt}$ and 
$$\frac{1}{2\Big(\frac{1 - \tt}{2}\Big)\Big(1 + \dots + \Big(\frac{1 - \tt}{2}\Big)^{m-2} \Big)} \underset{m \rightarrow \infty}{\longrightarrow} \frac{1}{2}\frac{1+\tt}{1-\tt}.$$
We conclude that there exists $N \in \N$ such that
$$\aa > \frac{1}{2\Big(\frac{1 - \tt}{2}\Big)\Big(1 + \dots + \Big(\frac{1 - \tt}{2}\Big)^{N-2} \Big)}.$$
And thus $n$ have to be less then $N$.
\end{proof}

\begin{proof}[Proof of Theorem \ref{SRAinTC}] 
There exists $0 < \tt < \frac{1}{2}$ such that $\aa > \frac{1}{2}\frac{1+\tt}{1-\tt}$.  Let $n(\tt,\aa) \in \N$ be a number provided 
by Lemma \ref{WeirdAngles}.

By Ramsey's theorem there exists $m \in \N$ such that if the set of all $3$-point subsets of $\{1,\dots,m\}$ are colored in two colors say red and blue, then either there is a $k$-point subset of $\{1,\dots,m\}$ such that all its $3$-point subsets are red or there is 
a $n(\tt,\aa)$-point subset of $\{1,\dots,m\}$ such that all its $3$-point subsets are blue.

We denote by $C = C(m, \tt) > 0$ the constant provided by Lemma \ref{StandAngles}. 

Suppose that $X$ is a $\DSE$ space such that $\frac{L(X)}{D(X)} > C$. By Lemma \ref{StandAngles} we have an $m$-point 
subspace $Y = \{y_1,\dots,y_m\} \subset X$ s.t., for every $1 \le i < j < k \le m$,
$$d(y_i,y_k) \le d(y_i,y_j) + \tt d(y_j,y_k).$$

We color the $3$-point subsets of $\{1,\dots,m\}$ in two colors. For $i < j < k$ we say that $\{i,j,k\}$ is red if 
$$d(y_j,y_k) \le d(y_i, y_k) + \aa d(y_i,y_j),$$
otherwise we say that $\{i,j,k\}$ is blue.

By definition of $m$ we have that one of the following holds.

\textit{Case A}: there exists an $k$-point subset $I \subset \{1,\dots,m\}$  such that all  its $3$-point subsets are red. In this case $\{y_i\}_{i \in I}$ is that required $k$-point subset satisfying $\SRA(\aa)$.

\textit{Case B:} there exists an $n(\tt,\aa)$-point subset $\{i_1,\dots,i_n\} \subset  \{1, \dots ,m\}$ such that all its $3$-point subsets are blue. For $1 \le l \le n$ we denote by $z_l$ a point $y_{i_l}$. Note that $Z = \{z_1,\dots,z_n\}$ satisfies, 
$$d(z_{i},z_{k}) \le d(z_{i},z_{j}) + \tt d(z_{j},z_{k})\text{, for every }1 \le i < j < k \le n,$$ 
$$d(z_n,z_{i+1}) \ge d(z_n,z_{i}) + \aa d(z_{i},z_{i+1})\text{, for every }1 \le i \le n - 1,$$
which contradicts Lemma \ref{WeirdAngles}.
\end{proof}

\section{Proofs of Theorems \ref{weakEFq}, \ref{EmbDoub}}


\begin{lemma}\label{FarOnSphere}
Let $X$ be a geodesic metric space and $x \in X$ be a point satisfying $\LRB$ condition with parameters $H > 0, K \ge 1$. Let $0 < r < R \le \frac{H}{2}$.
Suppose that  $p, y, y' \in X$, $z, z' \in B_R(x)$ are such that  
$d(p, x) < r,$ 
$d(p, y) = d(p, y'), d(p, z) = d(p, y) + d(y, z), d(p, z') = d(p, y') + d(y', z')$ 
and 
\begin{equation}d(y,y') > \bb d(p, y),\label{wwwNew}\end{equation} for some $0 < \bb < 1$.
Then, 
\begin{enumerate}
\item{$d(z,z') > \frac{\bb(R - r)}{K} - 2r,$}\label{FOS1}
\item{in particular if $r < \frac{R\bb}{6K}$ then $d(z,z') > 3r.$}\label{FOS2}
\end{enumerate}
\end{lemma}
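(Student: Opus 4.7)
The plan is to exploit the collinearity identities $d(p,z)=d(p,y)+d(y,z)$ and $d(p,z')=d(p,y')+d(y',z')$, which force $y$ and $y'$ to sit on minimizing geodesics from $p$ to $z$ and $z'$ respectively, and then to apply $\LRB$ at $x$ to these two geodesics. A mismatch in their lengths is handled by truncating the longer one to the length of the shorter, picking up an error bounded by the length difference, which is the origin of the $-2r$ correction in the conclusion.

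In detail: let $\rho := d(p,y) = d(p,y')$ and pick unit-speed minimizing geodesics $\gg_1 : [0,L_1] \rightarrow X$, $\gg_2 : [0,L_2] \rightarrow X$ emanating from $p$ such that $\gg_1(L_1)=z$, $\gg_2(L_2)=z'$, $\gg_1(\rho)=y$ and $\gg_2(\rho)=y'$. The hypotheses $d(p,x)<r$ and $R\le H/2$, together with the condition that $z,z'$ lie at distance $R$ from $x$, give by the triangle inequality that $L_1, L_2 \in (R-r,\, R+r)$, so $L_1, L_2 > R-r$ and $|L_1-L_2|<2r$, and also ensure the images of the geodesic segments we use lie in $B_H(x)$. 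Assume without loss of generality $L_1 \le L_2$ and set $z'' := \gg_2(L_1)$; then $\gg_1$ and $\gg_2|_{[0,L_1]}$ are two equal-length ($=L_1$) unit-speed minimizing geodesics starting at $p$ and contained in $B_H(x)$. Applying $\LRB$ at $x$ at the parameter $t := \rho/L_1 \in [0,1]$,
$$d(y,y') = d\bigl(\gg_1(tL_1),\gg_2(tL_1)\bigr) \le \frac{K\rho}{L_1}\, d(z,z''),$$
and combining with $d(y,y') > \bb\rho$ gives $d(z,z'') > \bb L_1/K > \bb(R-r)/K$.

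To finish, $\gg_2$ is unit-speed, so $d(z'',z') = L_2 - L_1 < 2r$; the triangle inequality then gives
$$d(z,z') \ge d(z,z'') - d(z'',z') > \frac{\bb(R-r)}{K} - 2r,$$
proving (1). Part (2) is a direct computation: when $r < R\bb/(6K)$ we have $\bb R/K > 6r$ and $\bb r/K + 2r < r + 2r = 3r$ (using $\bb/K < 1$), so the lower bound in (1) strictly exceeds $6r - 3r = 3r$.

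The step I expect to be the main nuisance is verifying the geodesic images lie in $B_H(x)$ so that the $\LRB$ condition at $x$ can be applied; this is the only place using the quantitative interplay between $r$, $R$, and $H$, and it follows from $R \le H/2$, $d(p,x)<r$, and the triangle inequality. The rest is a single application of $\LRB$ combined with two triangle inequalities.
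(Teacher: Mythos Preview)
Your argument is correct and essentially identical to the paper's: your truncated point $z''=\gg_2(L_1)$ is exactly the paper's $\wt z'$, and both proofs apply $\LRB$ to the two equal-length geodesics from $p$, then correct by the length difference $|L_1-L_2|<2r$ via the triangle inequality. The only cosmetic difference is that you set up the geodesics and the parameter $t=\rho/L_1$ explicitly, while the paper writes the resulting inequality directly.
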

\begin{proof}
From the triangle inequality for $\Delta x p z$ we have 
\begin{equation}R - r \le d(p, z) \le R + r.\label{e1}\end{equation}
The same argument provides
\begin{equation}R - r \le d(p, z') \le R + r.\label{e2}\end{equation}
Without loss of generality we can assume that $d(p, z) \le d(p, z')$.
Let $\wt z'$ be a point on a geodesic between $p$ and $z'$ such that $d(p,\wt z') = d(p, z)$. 
From (\ref{e1}) and (\ref{e2}) we have 
\begin{equation}d(\wt z', z') \le 2r\label{e3}.\end{equation}

$\LRB$-condition implies that
$$d(z, \wt z') \ge \frac{1}{K}\frac{d(p, z)}{d(p, y)}d(y, y') \overset{(\ref{wwwNew})}{>} $$ 
$$\ge \frac{1}{K}\frac{d(p, z)}{d(p, y)}\bb d(p, y) = \frac{\bb d(p, z)}{K} \overset{(\ref{e1})}{\ge}  \frac{\bb(R - r)}{K}.$$
The last inequality and (\ref{e3}) provide (\ref{FOS1}). And (\ref{FOS2}) follows directly from (\ref{FOS1}).
\end{proof}

\begin{proof}[Proof of Theorem \ref{EmbDoub}]
Note that Theorem \ref{EmbDoub}(\ref{Doub}) follows directly from Theorem \ref{EmbDoub}(\ref{Emb}).  In the following we prove Theorem \ref{EmbDoub}(\ref{Emb}).
We take 
$R = \frac{\min\{\ee, H\}}{2}$ and also fix $0 < \gg < 1$ such that $r < \frac{R(1- \gg)}{6K}$.
Since $X$ is geodesic and locally compact we have that  $S_{R}(x)$   is compact, see \cite[Proposition 2.5.22]{BBI}. 
Thus there exists a finite $r$-net $z_1, \dots, z_N$ in $S_{R}(x)$.

Let $\Phi:B_R(x) \rightarrow \R^N$ be a map  given by 
$$\Phi(x) = (d(x, z_1), \dots, d(x, z_N)).$$
We claim that it can be taken as the required embedding.
Obviously $\Phi$ is Lipschitz.  In the remaining part of the proof we show that for every $p, y \in B_r(x)$,
$$\vert \vert \Phi(p) - \Phi(y) \vert \vert \ge \gg d(p,y).$$
To do this we prove that for every $p,y \in B_r(x)$ there exist $i = 1,\dots,N$ such that 
$$\vert d(p, z_i) - d(y, z_i) \vert \ge \gg d(p,y).$$
By contradiction suppose that for every $i = 1,\dots,N$ 
\begin{equation}\vert d(p, z_i) - d(y, z_i) \vert \le \gg d(p,y).\label{ssss1}\end{equation}
By $\LEG$ property there exists $w \in S_{R}(x)$ such that $d(p, w) = d(w, y) + d(y, w)$.  
We claim that $d(z_i, w) > 3r$ for every $i = 1,\dots,N$ which contradicts the assumption that $z_1, \dots, z_N$ is a $r$-net.
Let $y' \in X$ be such that 
\begin{equation}d(p, y') = d(p, y)\label{qddef0}.\end{equation} and 
\begin{equation}d(p,y') + d(y',z_i) = d(p, z_i)\label{qddef}.\end{equation}
By the triangle inequality we have 
$$d(y,y') \ge d(z_i,y) - d(z_i,y') \overset{(\ref{ssss1})}{\ge}$$ 
$$\ge d(z_i,p) - \gg d(p, y) - d(z_i,y') \overset{(\ref{qddef})}{=}$$  
$$= d(p, y') - \gg d(p, y) \overset{(\ref{qddef0})}{=}  (1 - \gg) d(p, y).$$
Thus by Lemma \ref{FarOnSphere}(\ref{FOS2})
$$d(z_i, w) > 3r.$$
\end{proof}


\begin{proof}[proof of Theorem \ref{weakEFq}]
Let $r  = \frac{R(1 - \aa)}{6K}$ and $m = m(\ll, \aa, K)$ be such that $S_R(x)$ contains an $m$-point $r$-net. 
By Ramsey's theorem there exists $N \in \N$ such that  every coloring of unordered  pairs of elements of $\{1,\dots,N\}$ in $m^2$ colors has a monotone $3$-element subset.

Let $x_1,\dots,x_N \subset B_r(x)$. For every $i \ne j$ we fix a minimizing geodesic connecting them.  By $\LEG$ property 
there exists at least one an extension of this geodesic beyond $y$ of length at least $\ee$. We fix one of those extensions and denote it   by $\GG_{ij}$. We denote the (first) intersection point of  $\GG_{ij}$ and $S_R(x)$ by $I_{ij}$. We say that a pair of points $\{x_i,x_j\}$
is colored in the color $\{y_k,y_l\}$ if 
$$d(y_k,I_{ij}) \le r\text{ and }d(y_l,I_{ji}) \le r,$$ 
$$\text{or}$$ 
$$d(y_l,I_{ij}) \le r\text{ and }d(y_k,I_{ji}) \le r.$$ 

By definition of $N$ there exist a monotone $3$-element set. We denote those points by $x_i, x_j, x_k$. We denote the corresponding points in the $r$-net by $y_a$ and $y_b$. Either two of $I_{ij}, I_{jk}, I_{kj}$ are in $B_r(y_a)$ or two of them are in $B_r(y_a)$. Without loss of generality can assume that $I_{ij}, I_{jk} \in B_r(y_a)$ and hence $I_{ji}, I_{kj} \in B_r(y_b)$.

Now we are going to show that 
\begin{equation}d(x_i,x_k) \ge d(x_i,x_j) + \aa d(x_k,x_j), \label{www} \end{equation}
our picture is symmetric so the proof of
$$d(x_i,x_k) \ge d(x_k,x_j) +  \aa  d(x_i,x_j)$$
is absolutely the same. 
 
By contraction let  
$$d(x_i,x_k) < d(x_i,x_j) + \aa d(x_k,x_j). $$
Let $ x'_k$ be a point on $\GG_{ij}$ such that $d(x_j,  x'_k) = d(x_j, x_k)$ and $x_i$ and $x'_k$ are on the opposite sides of $x_j$. By (\ref{www}) and the triangle inequality for $\Delta x_i x_k  x'_k $ we have,
$$d(x_k,  x'_k) > (1 - \aa) d(x_j,x_k).$$
 
By Lemma \ref{FarOnSphere}(\ref{FOS2}) the last inequality implies 
$$d(I_{ij},I_{jk}) > 3r.$$
On the other hand by $I_{ij},I_{jk} \in B_r(y_A)$ we have
$$d(I_{ij},I_{jk}) \le 2r.$$
This is a contradiction.
 \end{proof}

\section{Proofs of Theorems \ref{NoCurvesI}, \ref{Glob}, \ref{GlobQ}}
\begin{proof}[Proof of Theorem \ref{Glob}]


For every $y \in X$ there exists $N(y) \in \N$ and $r(y) > 0$ such that 
every $N(y)$-point subspace of $B_{r(y)}(y)$ does not satisfy $\SRA(\aa)$.

The family $\{B_{r(y)}(y)\}_{y \in X}$ covers  $X$, let $\{{B_{r(y_i)}}\}_{i = 1}^{k}$ be a finite subcover. Then by pigeonhole principle every $((N(y_i)+\dots+N(y_k))$-point subspace of $X$ does not satisfy $\SRA(\aa)$. 
\end{proof}

\begin{proof}[Proof of Theorem \ref{GlobQ}]
Note that since $B_R(x)$ satisfies doubling condition with constant $\ll$ we have the following.
For $m = \ll^{\left \lceil{\log_2(\frac{R}{r})}\right \rceil }$ there exist $x_1, \dots, x_m$
such that $B_R(x) \subset \cup_{k = 1}^{m}B_r(x_k)$. Thus, by pigeonhole principle
$B_R(x)$ is free of $k\ll^{\left \lceil{\log_2(\frac{R}{r})}\right \rceil}$-point $\SRA(\aa)$ subspaces.
\end{proof}

\begin{proof}[Proof of Theorem \ref{NoCurvesI}]
In the case when $X$ is an $\SRA(\aa)$-free space the result follows directly from Theorem \ref{SRAinTC}.
Thus, it remains to consider the case when $X$ is a proper locally $\SRA(\aa)$-free geodesic metric space.

By contradiction suppose that there exists a point $x \in X$, $R > 0$ and a non rectifiable self-contracted curve $\gg:I \rightarrow B_R(x)$. Since $X$ is proper we have that  $\overline{B_{R}(x)}$ is compact.  By Theorem \ref{Glob} there exists $N \in \N$, such that $B_R(x)$ is free of $N$-point $\SRA(\aa)$ subspaces.  On the other hand by Theorem \ref{SRAinTC} $\gg(I)$ should contain an $N$-point $\SRA(\aa)$ subspace. Thus we have a contradiction.
\end{proof}


\subsection*{Acknowledgements}
I thank Sergey V. Ivanov for advising me during this work and especially for several key hints for the proof of Theorem \ref{SRAinTC}. 
I'm grateful to  Alexander Lytchak  for the idea of considering spaces with extendable geodesics.
 I thank Andrey Alpeev who introduced me into results of E. Stepanov and Y. Teplitskaya and the field of self-contracted curves in general.
I'm grateful to Nina Lebedeva and Tapio Rajala for fruitful discussions. 
Part of the work was done during the intense activity period ``Metric Measure Spaces and Ricci Curvature". I thank Matthias Erbar and Karl-Theodor Sturm for organizing this event. The paper is supported by DFG grant SPP 2026.

\bibliography{circle}

\bibliographystyle{plain}

\end{document}